\documentclass[a4paper,reqno]{article}

\usepackage[utf8]{inputenc}
\usepackage[T1]{fontenc}

\usepackage{fullpage}
\usepackage{booktabs}
\usepackage{paralist}

\usepackage{comment}

\newcommand{\nquad}{\!\!\!\!\!\!}

\usepackage{bm}

\usepackage{amsmath, amsthm, amssymb,mathtools}

\usepackage{tikz}
\usetikzlibrary{decorations.pathreplacing}
\usetikzlibrary{decorations.pathmorphing}
\usetikzlibrary{decorations.markings}
\usetikzlibrary{calc,arrows}
\tikzstyle arrowstyle=[scale=1]
\tikzstyle directed=[postaction={decorate,decoration={markings,
		mark=at position 0.65 with {\arrow[arrowstyle]{stealth}}}}]

\usepackage[outline]{contour}
\contourlength{2pt}

\usepackage[centertableaux]{ytableau}

\colorlet{mycolor}{blue}

\newtheorem{theorem}{Theorem}

\newtheorem{proposition}[theorem]{Proposition}
\newtheorem{problem}[theorem]{Problem}

\theoremstyle{definition}
\newtheorem{definition}[theorem]{Definition}

\numberwithin{equation}{section}

\DeclareMathOperator{\ASTZ}{\mathcal{AST\!Z}}

\DeclareMathOperator{\CSSPP}{\mathcal{CSSPP}}
\DeclareMathOperator{\NILP}{NILP}
\DeclareMathOperator{\inv}{inv}

\usepackage[bookmarks,hyperindex,colorlinks]{hyperref}

\begin{document}
	
\title{Weight-Preserving Bijections Between Integer Partitions and a Class of Alternating Sign Trapezoids}
\author{Hans H{\"o}ngesberg\thanks{The author acknowledges support from the Austrian Science Foundation FWF, SFB grant F50 and grant P34931.}}
\date{}

\maketitle

\begin{abstract}
	We construct weight-preserving bijections between column strict shifted plane partitions with one row and alternating sign trapezoids with exactly one column in the left half that sums to $1$. Amongst other things, they relate the number of $-1$s in the alternating sign trapezoids to certain elements in the column strict shifted plane partitions that generalise the notion of special parts in descending plane partitions. The advantage of these bijections is that they include configurations with $-1$s, which is a feature that many of the bijections in the realm of alternating sign arrays lack.
\end{abstract}

\section{Introduction}

Alternating sign matrices (ASMs) are square matrices with entries $-1$, $0$, and $1$ such that the nonzero entries alternate in sign and sum to $1$ along each row and column. When Mills, Robbins, and Rumsey conjectured that $n \times n$ ASMs are equinumerous with totally symmetric self-complementary plane partitions (TSSCPPs) in an $2n \times 2n \times 2n$ box \cite{MRR86} and with descending plane partitions (DPPs) without parts exceeding $n$ \cite{MRR83}, they initiated a strenuous quest for weight-preserving bijections between ASMs and other presumably equinumerous families of objects. These conjectures were proved nonbijectively by establishing the same enumeration formula for each of the three classes: for DPPs \cite{And79}, TSSCPPs \cite{And94}, and ASMs \cite{Zei96}, \cite{Kup96}. A fourth equinumerous class was introduced in a recent work by Ayyer, Behrend, and Fischer \cite{ABF20}, namely alternating sign triangles (ASTs). 

The realm of alternating sign arrays has been known for the lack of bijective proofs. Most of the bijections that have been established only consider special cases. For instance, there are partial bijections between ASMs and TSSCPPs by Cheballah and Biane \cite{CB12} and Striker \cite{Str18} and between ASMs and DPPs by Ayyer \cite{Ayy10}, Striker \cite{Str11}, and Fulmek \cite{Ful20}.

It took around four decades to find the first general bijective proof. Fischer and Konvalinka \cite{FK20a}, \cite{FK20b} have recently constructed an explicit but rather intricate bijection that relates ASMs with DPPs. Its underlying concept is turning Fischer's nonbijective proof of the ASM enumeration formula \cite{Fis16} into a bijective proof by using a generalisation of the Garsia--Milne involution principle. See \cite{FK20c} for a concise overview of how the proof was found and how the bijections work.

Ayyer, Behrend, and Fischer \cite{ABF20} announced an infinite family of alternating sign arrays that are generalising ASTs: alternating sign trapezoids (ASTZs). Fischer \cite{Fis19} proved that ASTZs are equinumerous with column strict shifted plane partitions (CSSPPs) of a fixed class which are a simple generalisation of DPPs essentially introduced by Andrews \cite{And79}. CSSPPs can also be interpreted as cyclically symmetric lozenge tilings of a hexagon with a central triangular hole \cite{BF}. This correspondence is also explicitly mentioned in \cite[Section~4]{Kra06} and implicitly present in \cite[Section~3]{CK00}.

In this paper, we present weight-preserving bijections between CSSPPs of a fixed class with one row and ASTZs with exactly one column in the left half that sums to $1$. We start with the definitions of ASTZs and CSSPPs and of the statistics we consider on these objects.

\begin{definition}
	For $n \geq 1$ and $l \geq 2$, an \emph{$(n,l)$-alternating sign trapezoid} is an array $(a_{i,j})_{1 \le i \le n,i \le j \le 2n+l-1-i}$ of $-1$s, $0$s, and $+1$s in a trapezoidal shape with $n$ rows
	of the following form
	\begin{equation*}
	\begin{array}[t]{ccccccccc}
	a_{1,1}&a_{1,2}&\cdots&\cdots&\cdots&\cdots&\cdots&\cdots&a_{1,2n+l-2}\\
	&a_{2,2}&\cdots&\cdots&\cdots&\cdots&\cdots&a_{2,2n+l-3}&\\
	&&\ddots&&&&\reflectbox{$\ddots$}&&\\
	&&&a_{n,n}&\cdots&a_{n,n+l-1}&&&
	\end{array}
	\end{equation*}
	such that the following four conditions hold:
	\begin{compactitem}
		\item the nonzero entries alternate in sign in each row and each column;
		\item the topmost nonzero entry in each column is $1$;
		\item each row sums to $1$; 
		\item each of the central $l-2$ columns sums to $0$.
	\end{compactitem}
	
	If a column sums to $1$, we call it a \emph{$1$-column}. Otherwise, it is a \emph{$0$-column}. In addition, if the bottom entry of a $1$-column is $0$, we also call it a \emph{$10$-column}; similarly, we call a $1$-column a \emph{$11$-column} if the bottom entry is $1$.
\end{definition}

\begin{figure}[htb]
	\centering
	\begin{equation*}
	\begin{array}{ccccccccccccc}
	0 & 0 & 0 & 0 & 0 & 0 & 1 & 0 & 0 & 0 & 0 & 0 & 0 \\
	& 0 & 1 & 0 & 0 & 0 & -1 & 0 & 1 & 0 & 0 & 0 & \\
	& & 0 & 0 & 0 & 0 & 1 & 0 & 0 & 0 & 0 & & \\
	& & & 1 & 0 & 0 & 0 & 0 & -1 & 1 & & & \\
	& & & & 1 & 0 & -1 & 0 & 1 & & & &
	\end{array}
	\end{equation*}
	
	\caption{Example of a $(5,5)$-ASTZ.}
	\label{fig:exampleASTZ}
\end{figure}

See Figure~\ref{fig:exampleASTZ} for an example of a $(5,5)$-ASTZ. Note that ASTZs can be interpreted as a generalisation of ASTs as follows: We can construct the class of ASTs with $n+1$ rows by adding an additional bottom row that consists of a single $1$ below $(n,3)$-ASTZs such that we obtain a triangular array. Ayyer, Behrend, and Fischer \cite{ABF20} showed that the number of $-1$s is equally distributed in ASTs with $n$ rows and $n \times n$ ASMs.

Let $\ASTZ_{n,l}$ denote the set of $(n,l)$-ASTZs. The case $l=1$ is special and we shall treat it separately in Section~\ref{sec:QAST}. In the following, we assume $l \ge 2$ until further notice. We introduce four different statistics on ASTZs. For $A \in \ASTZ_{n,l}$, we define
\begin{align*}
\mu(A) &\coloneqq \text{\# $-1$s in A,}\\
r(A) &\coloneqq \text{\# $1$-columns among the $n$ leftmost columns of A,}\\
p(A) &\coloneqq \text{\# $10$-columns among the $n$ leftmost columns of A,}\\
q(A) &\coloneqq \text{\# $10$-columns among the $n$ rightmost columns of A.}
\end{align*}
Finally, the weight of $A$ is set to be
\begin{equation*}
M^{\mu(A)} R^{r(A)} P^{p(A)} Q^{q(A)}.
\end{equation*}
Table~\ref{tab:(2,4)-ASTZs3Enum} shows the elements of $\ASTZ_{2,4}$ together with the corresponding weights.

\begin{table}[ht]
	\centering
	\caption{The eight ASTZs in $\ASTZ_{2,4}$ together with their weights.}
	\begin{math}
		\begin{array}{cccc}
			\toprule
			\begin{array}[t]{cccccc}
				1  &  0  &  0  &  0  &  0  &  0 \\
				&  1  &  0  &  0  &  0  &
			\end{array}
			
			&
			
			\begin{array}[t]{cccccc}
				0  &  0  &  0  &  0  &  1  &  0 \\
				&  1  &  0  &  0  &  0  &
			\end{array}
			
			&
			
			\begin{array}[t]{cccccc}
				0  &  0  &  0  &  0  &  0  &  1 \\
				&  1  &  0  &  0  &  0  &
			\end{array}
			
			&
			
			\begin{array}[t]{cccccc}
				1  &  0  &  0  &  0  &  0  &  0 \\
				&  0  &  0  &  0  &  1  &
			\end{array}
			
			\\\midrule
			
			R^2 & R Q & R & R
			
			\\\midrule\midrule
			
			\begin{array}[t]{cccccc}
				0  &  1  &  0  &  0  &  0  &  0 \\
				&  0  &  0  &  0  &  1  &
			\end{array}
			
			&
			
			\begin{array}[t]{cccccc}
				0  &  0  &  0  &  0  &  0  &  1 \\
				&  0  &  0  &  0  &  1  &
			\end{array}
			
			&
			
			\begin{array}[t]{cccccc}
				0  &  0  &  1  &  0  &  0  &  0 \\
				&  1  &  -1  &  0  &  1  &
			\end{array}
			
			&
			
			\begin{array}[t]{cccccc}
				0  &  0  &  0  &  1  &  0  &  0 \\
				&  1  &  0  &  -1  &  1  &
			\end{array}
			
			\\\midrule
			
			R P & 1 & M R & M R
			
			\\\bottomrule
		\end{array}
	\end{math}
	\label{tab:(2,4)-ASTZs3Enum}
\end{table}

\begin{definition}	
	A \emph{strict partition} $\mu$ is a tuple of strictly decreasing positive integers $\mu_i$; the number of elements in $\mu$ is denoted by $\ell(\mu)$. A \emph{shifted Young diagram} of \emph{shape} $\mu$ is a finite collection of cells arranged in $\ell(\mu)$ rows such that row~$i$ has length $\mu_i$, and each row is indented by one cell to the right compared to the row above.
	
	A \emph{column strict shifted plane partition} $\pi=(\pi_{i,j})_{1 \le i \le \ell(\mu),i \le j \le i + \mu_i -1}$ is a filling of a shifted Young diagram of shape $\mu$ with positive integers such that the entries weakly decrease along each row and strictly decrease down each column. We call the entries \emph{parts} and say that the partition is of \emph{class~k} if the first part of each row equals $k$ plus its corresponding row length; that is, $\pi_{i,i}=k+\mu_i$ for all $1 \le i \le \ell(\mu)$.
\end{definition}
	
\begin{figure}[ht]
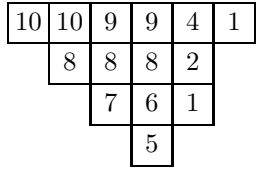

	\centering
	\begin{ytableau}
		10 & 10 & 9 & 9 & 4 & 1\\
		\none & 8 & 8 & 8 & 2\\
		\none & \none & 7 & 6 & 1\\
		\none & \none & \none & 5\\
	\end{ytableau}
	\caption{Example of a CSSPP of shape $(6,4,3,1)$ and of class $4$.}
	\label{fig:exampleCSSPP}
\end{figure}

	A CSSPP $\pi$ of class~$k$ can be interpreted as a cyclically symmetric lozenge tiling of a hexagon with two distinct side lengths $\pi_{1,1}$ and $\pi_{1,1}-k$ and with a central triangular hole of size~$k$. See Figure~\ref{fig:exampleCSSPP} for an example of a CSSPP. Figure~\ref{fig:exampleRhombusTiling} illustrates the corresponding interpretation as a lozenge tiling. Note that we consider the collection of zero cells to be a CSSPP of class $k$ for any $k$.
	
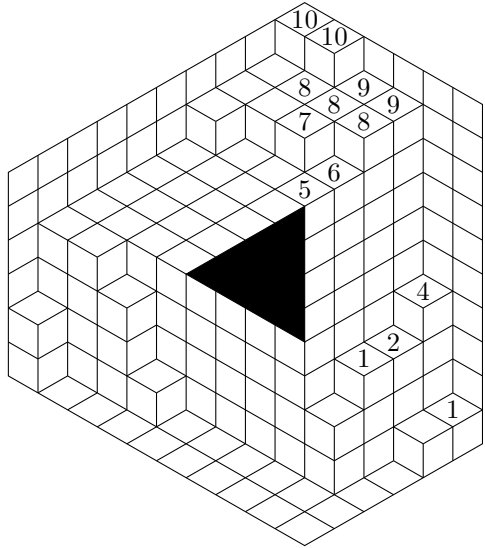
\begin{figure}[ht]
	\centering
	\begin{tikzpicture}[scale=.45]
	\clip ($ (30:-1) + (0,-5) $) rectangle ($ (30:17) + (0,3) $);
	
	\foreach \x in {0,...,16}
	\foreach \y in {-10,...,6}
	\node (\x/\y) at ($ (30:\x) + (0,\y) $) {};
	
	\node at ($ (30:10) + (0,5.5) $) {$10$};
	\node at ($ (30:11) + (0,4.5) $) {$10$};
	\node at ($ (30:12) + (0,2.5) $) {$9$};
	\node at ($ (30:13) + (0,1.5) $) {$9$};
	\node at ($ (30:14) + (0,-4.5) $) {$4$};
	\node at ($ (30:15) + (0,-8.5) $) {$1$};
	
	\node at ($ (30:10) + (0,3.5) $) {$8$};
	\node at ($ (30:11) + (0,2.5) $) {$8$};
	\node at ($ (30:12) + (0,1.5) $) {$8$};
	\node at ($ (30:13) + (0,-5.5) $) {$2$};
	
	\node at ($ (30:10) + (0,2.5) $) {$7$};
	\node at ($ (30:11) + (0,0.5) $) {$6$};
	\node at ($ (30:12) + (0,-5.5) $) {$1$};
	
	\node at ($ (30:10) + (0,0.5) $) {$5$};
	
	\draw (0/0.center) -- (0/6.center) -- (10/6.center) -- (16/0.center) -- (16/-10.center) -- (10/-10.center) -- cycle;
	
	\draw[fill] (6/0.center) -- (10/0.center) -- (10/-4.center) -- cycle;
	
	\draw (0/5.center) -- (9/5.center);
	\draw (10/5.center) -- (11/5.center);
	
	\draw (0/4.center) -- (6/4.center);
	\draw (7/4.center) -- (10/4.center);
	\draw (11/4.center) -- (12/4.center);
	
	\draw (0/3.center) -- (1/3.center);
	\draw (2/3.center) -- (8/3.center);
	\draw (9/3.center) -- (12/3.center);
	
	\draw (0/2.center) -- (1/2.center);
	\draw (3/2.center) -- (9/2.center);
	\draw (10/2.center) -- (13/2.center);
	
	\draw (1/1.center) -- (2/1.center);
	\draw (3/1.center) -- (4/1.center);
	\draw (5/1.center) -- (11/1.center);
	\draw (12/1.center) -- (14/1.center);
	
	\draw (1/0.center) -- (2/0.center);
	\draw (4/0.center) -- (5/0.center);
	\draw (10/0.center) -- (14/0.center);
	
	\draw (1/-1.center) -- (2/-1.center);
	\draw (4/-1.center) -- (5/-1.center);
	\draw (10/-1.center) -- (14/-1.center);
	
	\draw (2/-2.center) -- (3/-2.center);
	\draw (4/-2.center) -- (5/-2.center);
	\draw (10/-2.center) -- (14/-2.center);
	
	\draw (3/-3.center) -- (4/-3.center);
	\draw (5/-3.center) -- (6/-3.center);
	\draw (10/-3.center) -- (14/-3.center);
	
	\draw (4/-4.center) -- (6/-4.center);
	\draw (10/-4.center) -- (14/-4.center);
	
	\draw (5/-5.center) -- (7/-5.center);
	\draw (10/-5.center) -- (13/-5.center);
	\draw (14/-5.center) -- (15/-5.center);
	
	\draw (6/-6.center) -- (8/-6.center);
	\draw (10/-6.center) -- (11/-6.center);
	\draw (12/-6.center) -- (15/-6.center);
	
	\draw (7/-7.center) -- (9/-7.center);
	\draw (11/-7.center) -- (15/-7.center);
	
	\draw (8/-8.center) -- (10/-8.center);
	\draw (11/-8.center) -- (15/-8.center);
	
	\draw (9/-9.center) -- (13/-9.center);
	\draw (14/-9.center) -- (16/-9.center);
	
	\draw (15/1.center) -- (15/-8.center);
	\draw (15/-9.center) -- (15/-10.center);
	
	\draw (14/2.center) -- (14/-4.center);
	\draw (14/-5.center) -- (14/-8.center);
	\draw (14/-9.center) -- (14/-10.center);
	
	\draw (13/3.center) -- (13/2.center);
	\draw (13/1.center) -- (13/-5.center);
	\draw (13/-6.center) -- (13/-9.center);
	
	\draw (12/4.center) -- (12/3.center);
	\draw (12/1.center) -- (12/-5.center);
	\draw (12/-6.center) -- (12/-9.center);
	
	\draw (11/4.center) -- (11/3.center);
	\draw (11/2.center) -- (11/1.center);
	\draw (11/0.center) -- (11/-6.center);
	\draw (11/-7.center) -- (11/-9.center);
	
	\draw (10/5.center) -- (10/4.center);
	\draw (10/2.center) -- (10/1.center);
	\draw (10/-4.center) -- (10/-8.center);
	
	\draw (9/6.center) -- (9/5.center);
	\draw (9/3.center) -- (9/2.center);
	\draw (9/-3.center) -- (9/-7.center);
	
	\draw (8/6.center) -- (8/5.center);
	\draw (8/4.center) -- (8/3.center);
	\draw (8/-2.center) -- (8/-6.center);
	
	\draw (7/6.center) -- (7/5.center);
	\draw (7/4.center) -- (7/3.center);
	\draw (7/-1.center) -- (7/-5.center);
	
	\draw (6/6.center) -- (6/4.center);
	\draw (6/0.center) -- (6/-4.center);
	
	\draw (5/6.center) -- (5/4.center);
	\draw (5/1.center) -- (5/-2.center);
	\draw (5/-3.center) -- (5/-4.center);
	
	\draw (4/6.center) -- (4/4.center);
	\draw (4/2.center) -- (4/1.center);
	\draw (4/0.center) -- (4/-3.center);
	
	\draw (3/6.center) -- (3/4.center);
	\draw (3/2.center) -- (3/-2.center);
	
	\draw (2/6.center) -- (2/4.center);
	\draw (2/3.center) -- (2/-1.center);
	
	\draw (1/6.center) -- (1/2.center);
	\draw (1/1.center) -- (1/-1.center);
	
	\draw (11/-10.center) -- (2/-1.center);
	\draw (1/0.center) -- (0/1.center);
	
	\draw (12/-10.center) -- (6/-4.center);
	\draw (5/-3.center) -- (2/0.center);
	\draw (1/1.center) -- (0/2.center);
	
	\draw (13/-10.center) -- (12/-9.center);
	\draw (11/-8.center) -- (5/-2.center);
	\draw (4/-1.center) -- (1/2.center);
	
	\draw (14/-10.center) -- (13/-9.center);
	\draw (11/-7.center) -- (5/-1.center);
	\draw (4/0.center) -- (1/3.center);
	
	\draw (14/-9.center) -- (13/-8.center);
	\draw (12/-7.center) -- (11/-6.center);
	\draw (10/-5.center) -- (4/1.center);
	\draw (3/2.center) -- (1/4.center);
	
	\draw (15/-9.center) -- (14/-8.center);
	\draw (12/-6.center) -- (11/-5.center);
	\draw (6/0.center) -- (2/4.center);
	
	\draw (16/-9.center) -- (15/-8.center);
	\draw (13/-6.center) -- (12/-5.center);
	\draw (7/0.center) -- (3/4.center);
	
	\draw (16/-8.center) -- (15/-7.center);
	\draw (14/-6.center) -- (13/-5.center);
	\draw (8/0.center) -- (4/4.center);
	
	\draw (16/-7.center) -- (15/-6.center);
	\draw (14/-5.center) -- (13/-4.center);
	\draw (9/0.center) -- (5/4.center);
	
	\draw (16/-6.center) -- (14/-4.center);
	\draw (10/0.center) -- (6/4.center);
	
	\draw (16/-5.center) -- (14/-3.center);
	\draw (11/0.center) -- (8/3.center);
	\draw (7/4.center) -- (6/5.center);
	
	\draw (16/-4.center) -- (14/-2.center);
	\draw (12/0.center) -- (11/1.center);
	\draw (10/2.center) -- (7/5.center);
	
	\draw (16/-3.center) -- (14/-1.center);
	\draw (12/1.center) -- (8/5.center);
	
	\draw (16/-2.center) -- (14/0.center);
	\draw (13/1.center) -- (9/5.center);
	
	\draw (16/-1.center) -- (12/3.center);
	\draw (11/4.center) -- (9/6.center);
	\end{tikzpicture}
	\caption{The cyclically symmetric lozenge tiling of a hexagon with a central triangular hole that corresponds to the CSSPP in Figure~\ref{fig:exampleCSSPP}.}
	\label{fig:exampleRhombusTiling}
\end{figure}
	
	Let $\CSSPP_{n,k}$ denote the set of CSSPPs of class $k$ with at most $n$ parts in the first row. We introduce four different statistics of which two depend on a fixed parameter~$d\in \{1,\dots,k\}$. For $\pi \in \CSSPP_{n,k}$, we define
	\begin{align*}
	\mu_d(\pi) &\coloneqq \text{\# parts $\pi_{i,j}\in\{2,3,\dots,j-i+k\} \setminus \{j-i+d\}$,}\\
	r(\pi) &\coloneqq \text{\# rows of $\pi$,}\\
	p_d(\pi) &\coloneqq \text{\# parts $\pi_{i,j}=j-i+d$,}\\
	q(\pi) &\coloneqq \text{\# $1$s in $\pi$.}
	\end{align*}
The weight of $\pi$ is set to be
\begin{equation*}
M^{\mu_d(\pi)} R^{r(\pi)} P^{p_d(\pi)} Q^{q(\pi)}.
\end{equation*}
Table~\ref{tab:23CSSPPs3Enum} shows the elements of $\CSSPP_{2,3}$ together with the corresponding weights for all $1 \leq d \leq 3$.

\begin{table}[ht]
	\centering
	\caption{The eight CSSPPs in $\CSSPP_{2,3}$ together with their weights for all $1 \leq d \leq 3$.}
	\begin{math}
		\begin{array}{lcccccccc}
			\toprule
			
			&
			
			\begin{array}{c}
				\emptyset
			\end{array}
			
			&

			\ytableaushort{4}
			
			&
			
			\ytableaushort{55}
			
			&
			
			\ytableaushort{54}
			
			&
			
			\ytableaushort{53}
			
			&
			
			\ytableaushort{52}
			
			&
			
			\ytableaushort{51}
			
			&
			
			\ytableaushort{55,\none 4}
			
			\\\midrule

			d=1: & 1 & R & R & M R & M R & R P & R Q & R^2
			
			\\
			
			d=2: & 1 & R & R & M R & R P & M R & R Q & R^2
			
			\\
			
			d=3: & 1 & R & R & R P & M R & M R & R Q & R^2
			
			\\\bottomrule
		\end{array}
	\end{math}
	\label{tab:23CSSPPs3Enum}
\end{table}
	
Note that CSSPPs of class~$2$ correspond to DPPs by subtracting $1$ from each part of the CSSPP and deleting all parts equal to $0$ afterwards.
We call the parts $\pi_{i,j}\in\{2,3,\dots,j-i+k\} \setminus \{j-i+d\}$, that are counted by the statistic~$\mu_d$, \emph{$d$-special parts}. They generalise the \emph{special parts} defined by Mills, Robbins, and Rumsey \cite{MRR83} since the number of special parts in $\pi \in \CSSPP_{n,2}$ equals $\mu_2(\pi)$.
 
The author \cite{Hon} showed that the joint distribution of the respective four statistics on the sets $\ASTZ_{n,l}$ and $\CSSPP_{n,l-1}$ coincide. Their generating functions are each given by
\begin{equation}
\label{eq:GenFunc}
\det_{0 \le i,j \le n-1} \left(R \sum_{k = 0}^{i} Q^{i-k} \sum_{m=0}^{j} \binom{j}{m} M^{k-m} \left( \binom{k+l-3}{k-m} + \binom{k+l-3}{k-m-1} P M^{-1} \right) + \delta_{i,j} \right).
\end{equation}
For $n=2$ and $l=4$, \eqref{eq:GenFunc} yields $1+2R+2MR+RP+RQ+R^2$, which matches with the examples given in Tables~\ref{tab:(2,4)-ASTZs3Enum} and \ref{tab:23CSSPPs3Enum}.

In this paper, we construct explicit weight-preserving bijections between the sets $\{A \in \ASTZ_{n,l} \mid r(A) = 1\}$ and $\{\pi \in \CSSPP_{n,l-1} \mid r(\pi) = 1\}$ for all $1 \leq d \leq l-1$. To this end, we fix some notation: We number -- from left to right -- the $n$ leftmost columns of $A\in\ASTZ_{n,l}$ from $-n$ to $-1$ and the $n$ rightmost columns from $1$ to $n$. We observe that $A$ has exactly $n$ $1$-columns. If $r(A)=1$, that is, $A$ has exactly one $1$-column among the $n$ leftmost columns, then $A$ has also exactly one $0$-column among the $n$ rightmost columns. We denote the set of $(n,l)$-ASTZs with a unique $1$-column among the $n$ leftmost columns at position~$-i$ and a unique $0$-column among the $n$ rightmost columns at position~$j$ by $\ASTZ_{n,l}^{i,j}$. See Figure~\ref{fig:RunningExampleASTZ} for an example. We shall use it as a running example throughout this paper and call it $\bm{A}$.

\begin{figure}[htb]
	\centering
	\begin{equation*}
	\begin{array}{cccccccccccccccccccc}
	0 & 0 & 0 & 0 & 0 & 0 & 0 & 0 & 0 & 0 & 0 & 0 & 0 & 0 & 0 & 0 & 0 & 0 & 0 & 1 \\	
	& 0 & 0 & 0 & 0 & 0 & 0 & 0 & 0 & 0 & 0 & 0 & 0 & 0 & 0 & 0 & 0 & 1 & 0 & \\
	& & 0 & 0 & 0 & 0 & 0 & 0 & 0 & 0 & 0 & 0 & 0 & 0 & 0 & 1 & 0 & 0 & & \\
	& & & 0 & 0 & 0 & 0 & 0 & 0 & 0 & 0 & 0 & 0 & 0 & 1 & -1 & 1 & & & \\
	& & & & 0 & 0 & 0 & 0 & 0 & 0 & 0 & 0 & 0 & 0 & 0 & 1 & & & & \\
	& & & & & 0 & 0 & 0 & 0 & 0 & 1 & 0 & 0 & 0 & 0 & & & & & \\
	& & & & & & 0 & 1 & 0 & 0 & -1 & 0 & 0 & 1 & & & & & & \\
	& & & & & & & 0 & 0 & 0 & 0 & 0 & 1 & & & & & & & \\
	& & & & & & & & 0 & 0 & 0 & 1 & & & & & & & & 
	\end{array}
	\end{equation*}
	
	\caption{Our running example $\bm{A}\in\ASTZ_{9,4}^{2,8}$.}
	\label{fig:RunningExampleASTZ}
\end{figure}

Before we start with the construction of the bijections, we state some simple facts concerning the generating function $Z (n,l;i,j)$ which is defined as the sum of all weights of ASTZs in $\ASTZ_{n,l}^{i,j}$.

\begin{proposition}
	
	\begin{align}
	\label{propenum:1}
	&Z(n,l;i,j)=
	\begin{cases}
	0 & \text{if } i>j,\\
	R & \text{if } i=j.
	\end{cases}\\
	\label{propenum:2}
	&Z(n,l;i,j)=Z(n-1,l;i,j) \text{ if } j<n.\\
	\label{propenum:3}
	&Z(n,l;i,n)=Z(n-1,l+2;i-1,n-1) \text{ if } i>1.
	\end{align}
	
\end{proposition}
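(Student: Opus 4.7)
My plan is to prove (2) and (3) by constructing explicit row-deletion bijections, and then to deduce (1) by induction on $n$ using those two identities together with a direct verification of the base case $n = 1$.

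For (2), I will start from the general observation that each entry of the top row of any $(n,l)$-ASTZ is either $0$ or the topmost nonzero of its column, hence $+1$; so row $1$ consists of $0$s and $+1$s summing to $1$ and therefore contains a unique $+1$. When $j < n$, the column at right position $n$ is a $1$-column whose only entry is $a_{1,2n+l-2}$, which therefore equals $+1$, pinning the top row down as $(0,\dots,0,1)$. Deleting this row and reindexing the remaining columns yields an $(n-1,l)$-ASTZ in which the $1$-column stays at left position $-i$ and the $0$-column at right position $j$; the four statistics are preserved because no $-1$ is removed, and the all-zero leftmost column and the single $11$-column at the far right contribute nothing to $p$ or $q$.

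For (3) I will apply the same strategy to the bottom row, but the argument is more delicate. Each $a_{n,c}$ is the bottom entry of its column, and the column at left position $-1$ (forced to be a $0$-column by $i > 1$), together with every central column, has an even alternating pattern ending in $-1$; hence any nonzero $a_{n,c}$ with $c \le n+l-2$ would have to equal $-1$. On the other hand, the condition that row $n$ sums to $1$ forces its leftmost nonzero to be $+1$, contradicting the previous sentence. This forces $a_{n,n} = \dots = a_{n,n+l-2} = 0$ and therefore $a_{n,n+l-1} = 1$. Deletion of this row turns the old left column $n$ and right column $n+l-1$ into the two new central columns of an $(n-1,l+2)$-ASTZ, shifting the $1$-column to left position $-(i-1)$ and the $0$-column to right position $n-1$; statistic preservation is again routine since the absorbed right column is an $11$-column and does not contribute to $q$.

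With (2) and (3) in hand, (1) follows by a short induction on $n$. The base case $n = 1$ is immediate: the only $(1,l)$-ASTZ with $r(A) = 1$ is $(1,0,\dots,0)$, so $Z(1,l;1,1) = R$ while all other $Z(1,l;i,j)$ vanish. For $n \ge 2$, if $i = j < n$ then (2) reduces to $Z(n-1,l;i,i)$, and if $i = j = n$ then (3) reduces to $Z(n-1,l+2;n-1,n-1)$; both equal $R$ by the inductive hypothesis. If $i > j$, then $j < n$ and (2) applies, and the reduced instance either vanishes by induction or is already empty because the argument above gives $a_{1,1} = 0$ and so precludes a $1$-column at left position $-n$. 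The step I anticipate to require the most care is the statistic bookkeeping in (3), where the categorical role of two columns shifts from left/right to central under the bijection and one must verify that none of $\mu$, $r$, $p$, $q$ is inadvertently affected.
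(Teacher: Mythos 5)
Your proposal is correct and takes essentially the same approach as the paper: both arguments rest on the fact that the topmost row (when $j<n$) and the bottommost row (when $i>1$) are forced to equal $(0\ \dots\ 0\ 1)$, and both obtain \eqref{propenum:2} and \eqref{propenum:3} by deleting these rows. The only cosmetic difference is that you deduce \eqref{propenum:1} by induction from the two recurrences, whereas the paper reads it off directly from the stronger observation that the $i-1$ bottommost and $n-j$ topmost rows are all forced; your write-up also supplies the column-parity and statistic-bookkeeping details that the paper leaves implicit.
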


\begin{proof}
	Let $A \in \ASTZ_{n,l}^{i,j}$. Then the $i-1$ bottommost rows and the $n-j$ topmost rows have to be of the form $(0\: \dots\: 0\: 1)$. If $i>j$, then all rows are of the form $(0\: \dots\: 0\: 1)$ which contradicts the fact that there is a $1$-column in the left half of $A$. If $i=j$, then all rows but the $i\textsuperscript{th}$ one from the bottom are of the form $(0\: \dots\: 0\: 1)$, whereas the $i\textsuperscript{th}$ row is of the form $(1\: 0\: \dots\: 0)$. If $j<n$, removing the topmost row yields \eqref{propenum:2}, whereas removing the bottommost row if $i>1$ yields \eqref{propenum:3}.
\end{proof}

We denote the subset of all CSSPPs in $\CSSPP_{n,k}$ with only one row and exactly $j$ parts by  $\CSSPP_{n,k}^{j}$. Note that CSSPPs with only one row are ordinary integer partitions. It is obvious that $\CSSPP_{n,k}^{j}=\emptyset$ if $n < j$ and that $|\CSSPP_{n,k}^{j}|=|\CSSPP_{n-1,k}^{j}|$ if $n > j$. The last observation is a consequence of \eqref{propenum:2}.

The purpose of the present paper is to construct bijections
\begin{equation*}
\bigcup_{i=1}^{n} \ASTZ_{n,l}^{i,j} \longleftrightarrow \CSSPP_{n,l-1}^{j}
\end{equation*}
preserving the weights $(\mu,p,q)$ and $(\mu_d,p_d,q)$ for any $1 \le d \le l-1$, respectively. That is, we solve the following task:

\begin{problem}
	For $n \ge 1$, $l \ge 2$ and $1 \le d \le l-1$, construct bijections between $(n,l)$-ASTZs with exactly one $1$-column in the left half, $\mu$ entries equal to $-1$, $p$ $10$-columns in the left half, $q$ $10$-columns in the right half, and a $0$-column at position~$j$ and CSSPPs of class $l-1$ with exactly one row and $j$ parts, thereof $\mu$ $d$-special parts, $p$ parts at position~$k$ that are equal to $k-1+d$, and $q$ parts equal to $1$.
\end{problem}

As a side benefit, we obtain an enumeration formula for the number of elements in $\ASTZ_{n,l}^{i,j}$:

\begin{theorem}\label{thm:MainEnumeration}
	The number of alternating sign trapezoids $A \in \ASTZ_{n,l}^{i,j}$ such that $\mu(A)=\mu$, $p(A)=p$, and $q(A)=q$ is given by
	\begin{equation}\label{eq:MainEnumeration}
	\binom{j-i}{\mu+p+q} \binom{j+i+l-q-5}{\mu} - \binom{j-i-1}{\mu+p+q} \binom{j+i+l-q-4}{\mu}.
	\end{equation}
\end{theorem}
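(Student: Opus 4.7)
The plan is to deduce Theorem~\ref{thm:MainEnumeration} directly from the bijection $\bigcup_{i=1}^n \ASTZ_{n,l}^{i,j}\longleftrightarrow\CSSPP_{n,l-1}^{j}$ that the body of the paper constructs. Since, for every $1\le d\le l-1$, this bijection preserves $(\mu,p,q)\mapsto(\mu_d,p_d,q)$ and is built in a way that tracks $i$, it should match $\ASTZ_{n,l}^{i,j}$ with those $\pi\in\CSSPP_{n,l-1}^{j}$ carrying a prescribed value of a natural statistic $\widetilde{\imath}(\pi)$. The theorem thereby reduces to an explicit enumeration of one-row CSSPPs of class $l-1$ with $j$ parts, filtered by $\widetilde{\imath}=i$, $\mu_d=\mu$, $p_d=p$, and $q(\pi)=q$; a welcome corollary is the automatic $d$-independence of the answer.

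Formula~\eqref{eq:MainEnumeration} has the telescoping shape $f(i)-f(i+1)$ with
$$
f(i)=\binom{j-i}{\mu+p+q}\binom{j+i+l-q-5}{\mu},
$$
so I would instead prove the cumulative identity $\#\{\pi:\widetilde{\imath}(\pi)\ge i\}=f(i)$. The natural candidate for $\widetilde{\imath}(\pi)$, suggested by the factor $\binom{j-i}{\mu+p+q}$, is the length of the initial run of ``free'' parts of $\pi$, i.e.\ the largest $i$ for which $\pi_{1,c}\ge c+l-1$ holds for all $c\le i$. Since $\pi_{1,1}=j+l-1$ is forced, $\widetilde{\imath}\ge 1$ always, matching the range of $i$ and vacuously handling $i>j$.

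To verify the cumulative identity I would choose which $\mu+p+q$ of the last $j-i$ columns carry a non-free entry---giving the first binomial---then use the fact that the weakly decreasing constraint pins the $q$ ones to the tail of $\pi$ and that the $p$ diagonal entries $\pi_{1,c}=c+d-1$ have no further freedom, so only the values of the $\mu$ $d$-special parts remain to be selected. A lattice-path encoding obtained by subtracting $c-1$ from $\pi_{1,c}$ turns the nested $d$-special windows into initial segments of a common interval and should collapse the residual value count into the single binomial $\binom{j+i+l-q-5}{\mu}$, whose upper parameter accounts for the $i$ leading free columns and the $q$ trailing ones.

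The main obstacle will be identifying $\widetilde{\imath}$ precisely from the bijection and executing the lattice-path encoding so that the value count really collapses to one binomial with the stated parameters. Once this is in place, \eqref{eq:MainEnumeration} follows by telescoping, the boundary identities $i>j\Rightarrow 0$ and $i=j\Rightarrow\mathbf{1}_{\mu+p+q=0}$ of Proposition~\eqref{propenum:1} come out automatically, and the $d$-independence is secured by the existence of a distinct bijection for each $d\in\{1,\dots,l-1\}$.
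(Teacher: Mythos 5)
There is a genuine gap, and it sits at the heart of your plan: the statistic $\widetilde{\imath}(\pi)$ you propose is degenerate. For a one-row CSSPP of class $l-1$ with $j$ parts, the parts are weakly decreasing while the threshold $c+l-1$ is strictly increasing in the column index $c$, so once $\pi_{1,c}\le c+l-2$ every later part is also below its threshold. Hence the ``free'' parts always form an initial run whose length is exactly $j-\mu_d(\pi)-p_d(\pi)-q(\pi)$ (this is the statistic $\inv(\pi)$ of Section~\ref{sec:Behrend}). In other words, on the set of $\pi$ with prescribed $(\mu,p,q)$ your $\widetilde{\imath}$ is \emph{constant}, so $\#\{\pi:\widetilde{\imath}(\pi)\ge i\}$ is either the full count $\binom{j-1}{\mu+p+q}\binom{j-q+l-4}{\mu}$ or $0$, and cannot equal $f(i)=\binom{j-i}{\mu+p+q}\binom{j+i+l-q-5}{\mu}$ for all $i$. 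The quantity that actually records $i$ on the CSSPP side is the number of switches performed when undoing the ``shuffling'' of Section~\ref{sec:CSSPPtoASTZ}, i.e.\ it is read off from the intersection pattern of the two re-placed lattice paths, not from the partition's initial run; pinning that down is essentially equivalent to building the whole bijection, so your reduction does not simplify matters.

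The paper's proof is considerably more direct and does not pass through CSSPPs at all: an element of $\ASTZ_{n,l}^{i,j}$ with statistics $(\mu,p,q)$ is encoded, via the left-turn representation of its osculating path, as a pair of nonintersecting lattice paths from $\{S_x^i,S_y^i\}$ to $\{E_x,E_y\}$ with $S_x^i=(-\mu,-j-i-l+4+\mu+q)$, $S_y^i=(-\mu-p-q,-j+i+\mu+p+q)$, $E_x=(0,-1)$, $E_y=(0,0)$; the Lindstr\"om--Gessel--Viennot lemma then gives the $2\times 2$ determinant whose expansion is exactly \eqref{eq:MainEnumeration}. The telescoping structure $f(i)-f(i+1)$ you noticed is real and is exploited in the paper, but for constructing the bijection (via the set identity \eqref{eq:NILPs}), not for proving the enumeration. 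If you want to salvage your route, you would need to define $\widetilde{\imath}$ through that shuffling procedure rather than through the partition entries themselves.
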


We proceed as follows: In Section~\ref{sec:LatticePathsRepresentations}, we discuss the lattice path representations of ASTZs and CSSPPs as our main tool. In Section~\ref{sec:Bijection}, we present the actual construction of the bijections. First, we start with a detailed description of how to go from ASTZs to CSSPPs in Section~\ref{sec:ASTZtoCSSPP}. Then, we briefly describe the inverse from CSSPPs to ASTZs in Section~\ref{sec:CSSPPtoASTZ} and discuss the case~$l=1$ in Section~\ref{sec:QAST}. At the end, we finish with some concluding remarks in Section~\ref{sec:Remarks}. These include brief comments on what we can say about the bijections at the level of partitions in Section~\ref{sec:PartitionLevel}, other statistics preserved by the bijections in Section~\ref{sec:Behrend}, some details on other established bijections in Section~\ref{sec:OtherBijections}, and a final outlook on obstacles to a generalisation to all ASTZs and CSSPPs in Section~\ref{sec:GeneralBijection}. Note that, for the sake of simplicity, we often write \emph{the bijection} despite meaning a \emph{family of bijections}.

\section{Lattice path representations}
\label{sec:LatticePathsRepresentations}

\subsection{ASTZs as lattice paths}
\label{sec:ASTZLatticePaths}

The bijections presented in this paper are based on the concept of osculating paths. \emph{Osculating paths} are lattice paths that neither cross nor share edges but potentially share points. The idea of describing ASMs in terms of osculating paths dates back to \cite{BH95} and was further investigated in \cite{Brak97}, \cite{Err01}, and \cite{Beh08}, amongst others. In the following, we present a generalisation of this model.

\begin{definition}

Let $\mathcal{T}_{n,l}$ be the trapezoidal region of the square grid that consists of $n$ centred rows of lengths~$l+2n-2$, $l+2n-4$, \dots, $l$ as pictured below.

	\begin{center}
	\begin{tikzpicture}[scale=0.25,baseline=(current bounding box.center)]
		
		\draw (-8,10) -- (18,10);
		\draw (-8,8) -- (18,8);
		\draw (-6,6) -- (16,6);
		\draw (-4,4) -- (14,4);
		\draw (-2,2) -- (12,2);
		\draw (0,0) -- (10,0);
		
		\draw (-8,10) -- (-8,8);
		\draw (-6,10) -- (-6,6);
		\draw (-4,10) -- (-4,4);
		\draw (-2,10) -- (-2,2);
		\draw (0,10) -- (0,0);
		\draw (2,10) -- (2,0);
		\draw (4,10) -- (4,0);
		\draw (6,10) -- (6,0);
		\draw (8,10) -- (8,0);
		\draw (10,10) -- (10,0);
		\draw (12,10) -- (12,2);
		\draw (14,10) -- (14,4);
		\draw (16,10) -- (16,6);
		\draw (18,10) -- (18,8);
		
		\draw [decorate,decoration={brace,amplitude=5pt,mirror}] (-8.5,10) -- (-8.5,0);
		\draw [decorate,decoration={brace,amplitude=5pt,mirror}] (0,-.5) -- (10,-.5);
		
		\node at (-10,5) {$n$};
		\node at (5,-2) {$l$};
		
	\end{tikzpicture}
\end{center}

We consider the following tilings of this region that naturally define a family of osculating paths: An \emph{$(n,l)$-osculating paths configuration} is a tiling of $\mathcal{T}_{n,l}$ with the six tiles
\begin{center}
	\begin{tikzpicture}[scale=0.25,baseline=(current bounding box.center)]
		
		\draw (0,0) -- (2,0) -- (2,2) -- (0,2) -- cycle;

	\end{tikzpicture}\qquad
	\begin{tikzpicture}[scale=0.25,baseline=(current bounding box.center)]
		
		\draw (0,0) -- (2,0) -- (2,2) -- (0,2) -- cycle;
		\draw[mycolor,ultra thick] (0,1) -- (2,1);
		
	\end{tikzpicture}\qquad
	\begin{tikzpicture}[scale=0.25,baseline=(current bounding box.center)]
		
		\draw (0,0) -- (2,0) -- (2,2) -- (0,2) -- cycle;
		\draw[mycolor,ultra thick] (1,0) -- (1,2);
		
	\end{tikzpicture}\qquad
	\begin{tikzpicture}[scale=0.25,baseline=(current bounding box.center)]
		
		\draw (0,0) -- (2,0) -- (2,2) -- (0,2) -- cycle;
		\draw[mycolor,ultra thick,rounded corners] (0,1) -- (1,1) -- (1,2);
		
	\end{tikzpicture}\qquad
	\begin{tikzpicture}[scale=0.25,baseline=(current bounding box.center)]
		
		\draw (0,0) -- (2,0) -- (2,2) -- (0,2) -- cycle;
		\draw[mycolor,ultra thick,rounded corners] (1,0) -- (1,1) -- (2,1);
		
	\end{tikzpicture}\qquad
	\begin{tikzpicture}[scale=0.25,baseline=(current bounding box.center)]
		
		\draw (0,0) -- (2,0) -- (2,2) -- (0,2) -- cycle;
		\draw[mycolor,ultra thick,rounded corners] (0,1) -- (1,1) -- (1,2);
		\draw[mycolor,ultra thick,rounded corners] (1,0) -- (1,1) -- (2,1);
		
	\end{tikzpicture}
\end{center}
such that
\begin{compactitem}
	\item there are at most $n$ paths;
	\item each path starts vertically at the left edge;
	\item each path ends horizontally at the right edge.
\end{compactitem}
\end{definition}

Figure~\ref{fig:exampleLatticePathASTZ} shows an example of an osculating paths configuration. It corresponds to the ASTZ in Figure~\ref{fig:exampleASTZ}; we address the bijective correspondence between ASTZs and osculating paths configuration in the proposition below. We admit that one might confuse the tileable regions $\mathcal{T}_{n,l}$ with shifted Young diagrams. In Section~\ref{sec:PartitionLevel}, we shall exploit this resemblance in order to interpret ASTZs with exactly one $1$-column in the left half as certain integer partitions. Note that if we replace the \emph{osculations}
\resizebox{!}{\baselineskip}{\begin{tikzpicture}[scale=0.25]
		
		\draw (0,0) -- (2,0) -- (2,2) -- (0,2) -- cycle;
		\draw[mycolor,ultra thick,rounded corners] (0,1) -- (1,1) -- (1,2);
		\draw[mycolor,ultra thick,rounded corners] (1,0) -- (1,1) -- (2,1);
		
\end{tikzpicture}}
by \emph{crossings}
\resizebox{!}{\baselineskip}{\begin{tikzpicture}[scale=0.25]
		
		\draw (0,0) -- (2,0) -- (2,2) -- (0,2) -- cycle;
		\draw[mycolor,ultra thick] (0,1) -- (2,1);
		\draw[mycolor,ultra thick] (1,0) -- (1,2);
		
\end{tikzpicture}}\,, we get tilings akin to \emph{bumpless pipe dreams} introduced by Lam, Lee, and Shimozono \cite{LLS21}, which are in bijective correspondence with alternating sign matrices. 

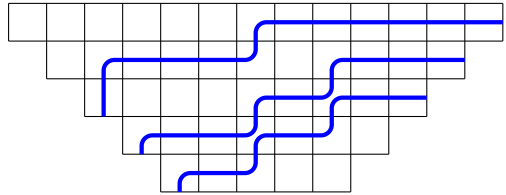
\begin{figure}[htb]
	\centering
	\begin{tikzpicture}[scale=0.25,baseline=(current bounding box.center)]
		
		\draw (-8,10) -- (18,10);
		\draw (-8,8) -- (18,8);
		\draw (-6,6) -- (16,6);
		\draw (-4,4) -- (14,4);
		\draw (-2,2) -- (12,2);
		\draw (0,0) -- (10,0);
		
		\draw (-8,10) -- (-8,8);
		\draw (-6,10) -- (-6,6);
		\draw (-4,10) -- (-4,4);
		\draw (-2,10) -- (-2,2);
		\draw (0,10) -- (0,0);
		\draw (2,10) -- (2,0);
		\draw (4,10) -- (4,0);
		\draw (6,10) -- (6,0);
		\draw (8,10) -- (8,0);
		\draw (10,10) -- (10,0);
		\draw (12,10) -- (12,2);
		\draw (14,10) -- (14,4);
		\draw (16,10) -- (16,6);
		\draw (18,10) -- (18,8);
		
		\draw[mycolor,ultra thick,rounded corners] (-3,4) -- (-3,7) -- (5,7) -- (5,9) -- (18,9);
		\draw[mycolor,ultra thick,rounded corners] (-1,2) -- (-1,3) -- (5,3) -- (5,5) -- (9,5) -- (9,7) -- (16,7);
		\draw[mycolor,ultra thick,rounded corners] (1,0) -- (1,1) -- (5,1) -- (5,3) -- (9,3) -- (9,5) -- (14,5);
		
	\end{tikzpicture}
	
	\caption{The $(5,5)$-osculating paths configuration that corresponds to the $(5,5)$-ASTZ in Figure~\ref{fig:exampleASTZ}.}
	\label{fig:exampleLatticePathASTZ}
	
\end{figure}

\begin{proposition}
	For $n \ge 1$ and $l \ge 2$, there is a bijective correspondence between $(n,l)$-ASTZs and $(n,l)$-osculating paths configurations which maps $A\in\ASTZ_{n,l}$ to an $(n,l)$-osculating paths configuration with $r(A)$ paths.
\end{proposition}

\begin{proof}
	It is easy to see how an ASTZ gives rise to an osculating paths configuration. Every entry of $A\in\ASTZ_{n,l}$ corresponds in an obvious bijection to a unit square in $\mathcal{T}_{n,l}$. For every $1$-column in the left half of $A$, we construct a path in $\mathcal{T}_{n,l}$ as follows: Each path starts vertically at the corresponding square of the bottommost entry of the $1$-column and moves upwards. If the path reaches a square corresponding to a $1$ or a $-1$, then it turns to the right or to the left, respectively.  If the path reaches a square that corresponds to the bottommost entry of a $10$-column, it also turns to the left. Thus, the path ends horizontally at a square that corresponds to the bottommost entry of a $0$-column in the right half of $A$. After all paths are determined, we have obtained a tiling of the region $\mathcal{T}_{n,l}$ with the tiles
	\begin{center}
		\begin{tikzpicture}[scale=0.25,baseline=(current bounding box.center)]
			
			\draw (0,0) -- (2,0) -- (2,2) -- (0,2) -- cycle;
			
		\end{tikzpicture}\qquad
		\begin{tikzpicture}[scale=0.25,baseline=(current bounding box.center)]
			
			\draw (0,0) -- (2,0) -- (2,2) -- (0,2) -- cycle;
			\draw[mycolor,ultra thick] (0,1) -- (2,1);
			
		\end{tikzpicture}\qquad
		\begin{tikzpicture}[scale=0.25,baseline=(current bounding box.center)]
			
			\draw (0,0) -- (2,0) -- (2,2) -- (0,2) -- cycle;
			\draw[mycolor,ultra thick] (1,0) -- (1,2);
			
		\end{tikzpicture}\qquad
		\begin{tikzpicture}[scale=0.25,baseline=(current bounding box.center)]
			
			\draw (0,0) -- (2,0) -- (2,2) -- (0,2) -- cycle;
			\draw[mycolor,ultra thick,rounded corners] (0,1) -- (1,1) -- (1,2);
			
		\end{tikzpicture}\qquad
		\begin{tikzpicture}[scale=0.25,baseline=(current bounding box.center)]
			
			\draw (0,0) -- (2,0) -- (2,2) -- (0,2) -- cycle;
			\draw[mycolor,ultra thick,rounded corners] (1,0) -- (1,1) -- (2,1);
			
		\end{tikzpicture}\qquad
		\begin{tikzpicture}[scale=0.25,baseline=(current bounding box.center)]
			
			\draw (0,0) -- (2,0) -- (2,2) -- (0,2) -- cycle;
			\draw[mycolor,ultra thick] (0,1) -- (2,1);
			\draw[mycolor,ultra thick] (1,0) -- (1,2);
			
		\end{tikzpicture}
	\end{center}
	We redirect the paths by replacing the crossings by osculations. In the end, $A$ is mapped to an $(n,l)$-osculating paths configuration where the number of paths equals $r(A)$.
	
	Conversely, given an $(n,l)$-osculating paths configuration, we next construct a corresponding $(n,l)$-ASTZ. First, we map the nonempty tiles along the paths to the entries $-1$, $0$, and $+1$ as follows:
	\begin{equation*}
		\def\arraystretch{2}
		\begin{array}{ccl}
	\begin{tikzpicture}[scale=0.25,baseline=(current bounding box.center)]
		\draw (0,0) -- (2,0) -- (2,2) -- (0,2) -- cycle;
		\draw[mycolor,ultra thick] (0,1) -- (2,1);
	\end{tikzpicture},\,
	\begin{tikzpicture}[scale=0.25,baseline=(current bounding box.center)]
		\draw (0,0) -- (2,0) -- (2,2) -- (0,2) -- cycle;
		\draw[mycolor,ultra thick] (1,0) -- (1,2);
	\end{tikzpicture},\,
	\begin{tikzpicture}[scale=0.25,baseline=(current bounding box.center)]
		\draw (0,0) -- (2,0) -- (2,2) -- (0,2) -- cycle;
		\draw[mycolor,ultra thick,rounded corners] (0,1) -- (1,1) -- (1,2);
		\draw[mycolor,ultra thick,rounded corners] (1,0) -- (1,1) -- (2,1);
	\end{tikzpicture}	& \longmapsto & 0 \\[2pt]
	\begin{tikzpicture}[scale=0.25,baseline=(current bounding box.center)]
		\draw (0,0) -- (2,0) -- (2,2) -- (0,2) -- cycle;
		\draw[mycolor,ultra thick,rounded corners] (1,0) -- (1,1) -- (2,1);
	\end{tikzpicture} & \longmapsto & +1\\
	\begin{tikzpicture}[scale=0.25,baseline=(current bounding box.center)]
		\draw (0,0) -- (2,0) -- (2,2) -- (0,2) -- cycle;
		\draw[mycolor,ultra thick,rounded corners] (0,1) -- (1,1) -- (1,2);
	\end{tikzpicture} & \longmapsto & \begin{cases*}
	0 & at the right edge\\
	-1 & otherwise\\
	\end{cases*}\\
		\end{array}
	\end{equation*}

Thus, we obtain a trapezoidal array $(a_{i,j})_{1 \le i \le n,i \le j \le 2n+l-1-i}$ with $-1$s, $0$s, $+1$s, and blank tiles~\resizebox{!}{\baselineskip}{\begin{tikzpicture}[scale=0.25]
		
		\draw (0,0) -- (2,0) -- (2,2) -- (0,2) -- cycle;
		
\end{tikzpicture}}\,. In this array, the nonzero integers alternate in sign since the paths in the osculating paths configuration go from the bottom left to the top right. It is also clear that, in each row and each column, the first nonzero entry from the left or the top, respectively, is $+1$. This observation suggests to call such an array a \emph{partial ASTZ}. We show that such a partial ASTZ can be completed to an ASTZ as follows: First, we map every blank tile at the right edge to $+1$. Finally, we map all the remaining blank tiles to $0$.

We have to show that the resulting array is indeed an ASTZ. After replacing the blank tiles, the rows and columns still alternate in sign. This is true since a blank tile is either mapped to $0$ or $1$, where the latter case only occurs at the right edge of the grid. If the row in the osculating paths configuration consisted of only blank tiles, then it is mapped to $(0\: \dots 0\: 1)$. Otherwise, it is mapped to $(*\: \dots\: *\: 0\: \dots\: 0\:1)$, where $*\: \dots\: *\:$ ends in $-1$ and alternates in sign by the previous observation. To see that also in columns the nonzero entries alternate in sign, we observe that above a blank tile at the right edge -- which is mapped to $1$ -- the first nonempty tile is either \resizebox{!}{\baselineskip}{\begin{tikzpicture}[scale=0.25]
		
		\draw (0,0) -- (2,0) -- (2,2) -- (0,2) -- cycle;
		\draw[mycolor,ultra thick] (0,1) -- (2,1);
		
\end{tikzpicture}} or \resizebox{!}{\baselineskip}{\begin{tikzpicture}[scale=0.25]

	\draw (0,0) -- (2,0) -- (2,2) -- (0,2) -- cycle;
	\draw[mycolor,ultra thick,rounded corners] (0,1) -- (1,1) -- (1,2);

\end{tikzpicture}}\,, which are mapped to $0$ or $-1$, respectively. Hence, the nonzero entries both in rows and columns alternate in sign.

We have already seen that each row and each column sums to $0$ or $1$. Regarding the rows, the row sum is clearly $1$ if the last entry in the row is equal to $1$. Otherwise, the last tile at the right edge of that row has to be \resizebox{!}{\baselineskip}{\begin{tikzpicture}[scale=0.25]
		
		\draw (0,0) -- (2,0) -- (2,2) -- (0,2) -- cycle;
		\draw[mycolor,ultra thick,rounded corners] (0,1) -- (1,1) -- (1,2);
		
\end{tikzpicture}}\,, which is mapped to $0$. But this tile has to be preceded by \resizebox{!}{\baselineskip}{\begin{tikzpicture}[scale=0.25]

	\draw (0,0) -- (4,0) -- (4,2) -- (0,2) -- cycle;
	\draw (2,0) -- (2,2);
	\draw (6,0) -- (8,0) -- (8,2) -- (6,2) -- cycle;
	\draw[mycolor,ultra thick,rounded corners] (1,0) -- (1,1) -- (4,1);
	\draw[mycolor,ultra thick,rounded corners] (6,1) -- (8,1);
	\node at (5,1) {$\cdots$};

\end{tikzpicture}}\, which is mapped to $1\:0\:\dots\: 0$. Thus, each row sums to $1$. Regarding the columns, we see that in each of the central $l-2$ columns every \resizebox{!}{\baselineskip}{\begin{tikzpicture}[scale=0.25]

	\draw (0,0) -- (2,0) -- (2,2) -- (0,2) -- cycle;
	\draw[mycolor,ultra thick,rounded corners] (1,0) -- (1,1) -- (2,1);

\end{tikzpicture}} has to be above a \resizebox{!}{\baselineskip}{\begin{tikzpicture}[scale=0.25]

	\draw (0,0) -- (2,0) -- (2,2) -- (0,2) -- cycle;
	\draw[mycolor,ultra thick,rounded corners] (0,1) -- (1,1) -- (1,2);

\end{tikzpicture}} since paths in the osculating paths configuration can only start at the left edge. Hence, the central $l-2$ columns sum to $0$ and the array is indeed an ASTZ. Finally, it is clear by construction that the number of paths is equal to the number of $1$-columns in the left half of the ASTZ.
\end{proof}

The ASTZs in $\ASTZ_{n,l}^{i,j}$ are thus mapped to osculating paths configurations with a single path. We interpret the squares of $\mathcal{T}_{n,l}$ as lattice points in the coordinate plane and identify the square corresponding to the bottommost entry of the column at position~$i$ with the origin. By this means, we obtain a one-to-one correspondence between $\ASTZ_{n,l}^{i,j}$ and lattice paths from $(-l-2i+3,0)$ to $(j-i,j-i)$ which only consist of rightward and upward unit steps and which do not cross the main diagonal~$y=x$.

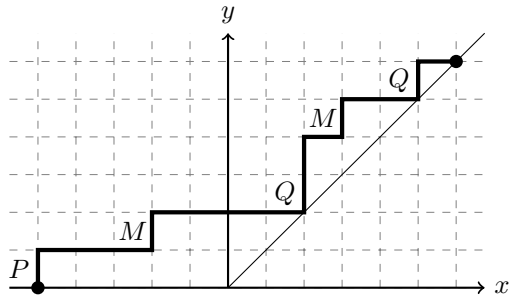
\begin{figure}[htb]
	\centering
	\begin{tikzpicture}[scale=.5]
	\draw [help lines,step=1cm,dashed] (-5.75,0) grid (6.75,6.75);
	
	\draw[->,thick] (-5.75,0)--(6.75,0) node[right]{$x$};
	\draw[->,thick] (0,0)--(0,6.75) node[above]{$y$};
	
	\draw (0,0)--(6.75,6.75);
	
	\fill (-5,0) circle (5pt);
	
	\fill (6,6) circle (5pt);
	
	\draw[ultra thick] (-5,0) -- (-5,1) -- (-2,1) -- (-2,2) -- (2,2) -- (2,4) -- (3,4) -- (3,5) -- (5,5) -- (5,6) -- (6,6);
	
	\node at (-5.5,0.5) {$P$};
	\node at (-2.5,1.5) {$M$};
	\node at (1.5,2.5) {$Q$};
	\node at (2.5,4.5) {$M$};
	\node at (4.5,5.5) {$Q$};
	
	
	\end{tikzpicture}
	
	\caption{The lattice path corresponding to $\bm{A}$.}
	\label{fig:LP1}
\end{figure}

The statistics on $A \in \ASTZ_{n,l}^{i,j}$ are reflected in the associated lattice path as follows: If the $1$-column we start with is a $10$-column, then the first step of the lattice path is an upward step; otherwise, it is a rightward step. For every $10$-column in the right half of $A$, we have a \emph{left turn} on the main diagonal, that is, a rightward step which is immediately followed by an upward step. All the other left turns in the lattice path result from the $-1$s in $A$. See Figure~\ref{fig:LP1} for an illustration of the weight of $\bm{A}$ in terms of its corresponding lattice path.

\subsection{CSSPPs as lattice paths}
\label{sec:CSSPPLatticePaths}

The representation of CSSPPs as a family of nonintersecting lattice paths is well-known; see Figure~\ref{fig:exampleLatticePathCSSPP} for an illustration.

\begin{figure}[ht]
	\centering
		\begin{ytableau}
			10 & 10 & 9 & 9 & 4 & 1\\
			\none & 8 & 8 & 8 & 2\\
			\none & \none & 7 & 6 & 1\\
			\none & \none & \none & 5\\
		\end{ytableau}
		$\quad\longleftrightarrow\quad$
		\begin{tikzpicture}[scale=.5,baseline=(current bounding box.center)]
		\draw [help lines,step=1cm,dashed] (0,0) grid (5.75,10.75);
		
		\draw[->,thick] (0,0)--(5.75,0) node[right]{$x$};
		\draw[->,thick] (0,0)--(0,10.75) node[above]{$y$};
		
		\fill (5,0) circle (5pt);
		\fill (3,0) circle (5pt);
		\fill (2,0) circle (5pt);
		\fill (0,0) circle (5pt);
		
		\fill (0,9) circle (5pt);
		\fill (0,7) circle (5pt);
		\fill (0,6) circle (5pt);
		\fill (0,4) circle (5pt);
		
		\draw[ultra thick] (0,9) --++ (1,0) --++ (0,-1) --++ (2,0) --++ (0,-5) --++ (1,0) --++ (0,-3) --++ (1,0);
		
		\draw[ultra thick] (0,7) --++ (2,0) --++ (0,-6) --++ (1,0) --++ (0,-1) ;
		
		\draw[ultra thick] (0,6) --++ (0,-1) --++ (1,0) --++ (0,-5) --++ (1,0);
		
		\draw[ultra thick] (0,4) --++ (0,-4) ;
		
		\node at (0.5,9.5) {$10$};
		\node at (1.5,8.5) {$9$};
		\node at (2.5,8.5) {$9$};
		\node at (3.5,3.5) {$4$};
		\node at (4.5,.5) {$1$};
		
		\node at (0.5,7.5) {$8$};
		\node at (1.5,7.5) {$8$};
		\node at (2.5,1.5) {$2$};
		
		\node at (0.5,5.5) {$6$};
		\node at (1.5,.5) {$1$};
		
		
		\end{tikzpicture}
		\caption{Example of the correspondence between a CSSPP and a family of nonintersecting lattice paths.}
		\label{fig:exampleLatticePathCSSPP}
\end{figure}
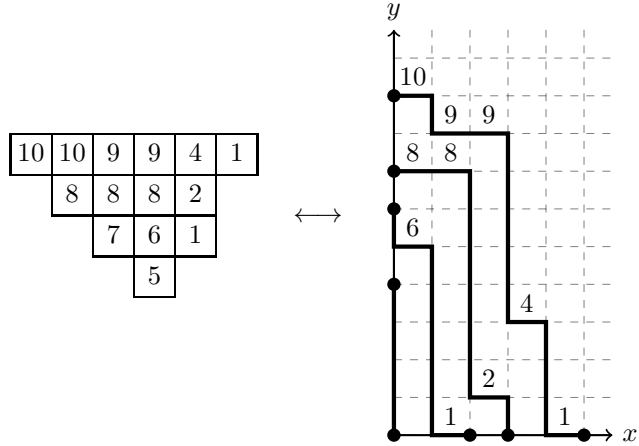

We encode a CSSPP row by row. If a row is given by the partition
\resizebox*{!}{\baselineskip}{\ytableaushort{{\lambda_1}{\lambda_2}{\dots}{\lambda_j}}}\,, it is associated to a lattice path from $(0,\lambda_1-1)$ to $(j-1,0)$ that only comprises rightward and downward unit steps in the following way: We start in the point $(-1,\lambda_1-1)$ and draw a lattice path to $(j-1,0)$ such that each part~$\lambda_m$ corresponds to a horizontal step at height~$\lambda_m-1$ for all $m\in\{1,\dots,j\}$; eventually, we delete the first step. In total, we obtain a family of nonintersecting lattice paths where the number of paths is given by the number of rows in the CSSPP.

A partition $\lambda \in \CSSPP_{n,k}^{j}$ is encoded by a single lattice path from $(0,j+k-1)$ to $(j-1,0)$. The statistics on $\lambda$ translate into the following properties: The lattice path has exactly $q(\lambda)$ horizontal steps at height $0$. The step right beneath the line $y=x+d$ is horizontal if and only if $p_d(\lambda)=1$. Finally, the number of horizontal steps below the line $y=x+k$ that are neither at height $0$ nor directly below $y=x+d$ equals $\mu_d(\lambda)$. For instance, the weight associated to the CSSPP  \resizebox*{!}{\baselineskip}{\ytableaushort{{11}9765411}} is illustrated in Figure~\ref{fig:RunningExampleCSSPP}. We call this CSSPP $\bm{\pi}$ for later reference.

\begin{figure}[htb]
	\centering
	\begin{tikzpicture}[scale=.5]
	\draw [help lines,step=1cm,dashed] (0,0) grid (7.75,10.75);
	
	\draw[->,thick] (0,0)--(7.75,0) node[right]{$x$};
	\draw[->,thick] (0,0)--(0,10.75) node[above]{$y$};
	
	\draw (0,3)--(5.5,8.5);
	\node at (7.25,8.5) {\contour{white}{$y=x+k$}};
	
	\draw (0,1)--(5.5,6.5);
	\node at (7.25,6.5) {\contour{white}{$y=x+d$}};
	
	\fill (7,0) circle (5pt);
	
	\fill (0,10) circle (5pt);

	\node at (2.5,5.5) {\contour{white}{$M$}};
	\node at (3.5,4.5) {\contour{white}{$P$}};
	\node at (4.5,3.5) {$M$};
	\node at (5.5,.5) {$Q$};
	\node at (6.5,.5) {$Q$};
	
	\draw[ultra thick] (0,10) --++ (0,-1) --++ (0,-1) --++ (1,0) --++ (0,-1) --++ (0,-1) --++ (1,0) --++ (0,-1) --++ (1,0) --++ (0,-1) --++ (1,0) --++ (0,-1) --++ (1,0) --++ (0,-1) --++ (0,-1) --++ (0,-1) --++ (1,0) --++ (1,0);
	
	\end{tikzpicture}
	
	\caption{The weighted lattice path associated to $\bm{\pi}$ for $d=1$. The class~$k$ of $\bm{\pi}$ is $k=3$.}
	\label{fig:RunningExampleCSSPP}
\end{figure}
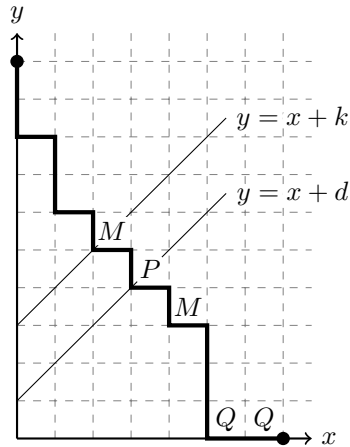

The number of partitions $\lambda \in \CSSPP_{n,k}^{j}$ with $\mu=\mu_d(\lambda)$, $p=p_d(\lambda)$, and $q=q(\lambda)$ for any $d\in\{1,\dots,k\}$ is given by
\begin{equation}\label{eq:EnumCSSPP}
\binom{j-1}{\mu+p+q} \binom{j-q+k-3}{\mu}.
\end{equation}

This product of two binomial coefficients can easily be seen as follows: Consider a partition of class~$k$ with $j$ parts. Its corresponding lattice path goes from $(0,j+k-1)$ to $(j-1,0)$ and crosses the line $y=x+k$. The intersection point divides the path into two smaller paths. The lower one contains all the steps possibly contributing a factor $M$, $P$, or $Q$ to the weight. It consists of $\mu+p+q$ horizontal steps, and, therefore, the coordinates of the intersection with the line $y=x+k$ are given by $(j-\mu-p-q-1,j+k-\mu-p-q-1)$. Next, we remove the following steps from the lower path: the $q$ horizontal steps at height~$0$, the vertical step from height~$0$ to height~$1$, and the step just below the line~$y=x+d$. Due to their uniquely determined positions, these steps can easily be reinserted after removing. In the end, we obtain a path consisting of $\mu$ horizontal and $j+k-\mu-q-3$ vertical steps. The final path looks like as shown in Figure~\ref{fig:CSSPPEnum}. These paths are enumerated by \eqref{eq:EnumCSSPP}. Note that we also obtain \eqref{eq:EnumCSSPP} by summing \eqref{eq:MainEnumeration} over all $1 \le i \le n$.

\begin{figure}[htb]
	\centering
	\begin{tikzpicture}[scale=.5]
	\draw [help lines,step=1cm,dashed] (-.75,.25) grid (4.75,10.75);

	\fill (0,10) circle (5pt);
	
	\fill (2,5) circle (5pt);
	
	\fill (4,1) circle (5pt);
	
	\draw[ultra thick] (0,10) --++ (0,-1) --++ (0,-1) --++ (1,0) --++ (0,-1) --++ (0,-1) --++ (1,0) --++ (0,-1) --++ (1,0) --++ (0,-1) --++ (0,-1) --++ (1,0) --++ (0,-1) --++ (0,-1);
	
	\draw[decorate,thick,decoration={brace,amplitude=.125cm}]  (0,10.25) -- (2,10.25);
	\node[anchor=south] at (1,10.5) {$j-\mu-p-q-1$};
	
	\draw[decorate,thick,decoration={brace,amplitude=.125cm}]  (-.25,5) -- (-.25,10);
	\node[anchor=east] at (-.75,7.5) {$\mu+p+q$};
	
	\draw[decorate,thick,decoration={brace,mirror,amplitude=.125cm}]  (2,.75) -- (4,.75);
	\node[anchor=north] at (3,.5) {$\mu$};
	
	\draw[decorate,thick,decoration={brace,mirror,amplitude=.125cm}]  (4.25,1) -- (4.25,5);
	\node[anchor=west] at (4.5,3) {$j+k-\mu-q-3$};
	
	\end{tikzpicture}
	
	\caption{The reduced lattice path representation of $\bm{\pi}$.}
	\label{fig:CSSPPEnum}
\end{figure}
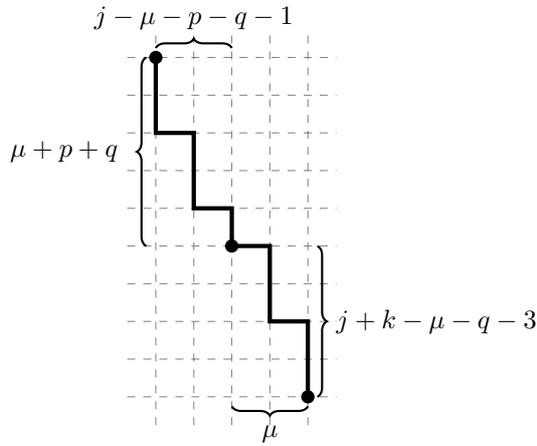

\section{The construction}
\label{sec:Bijection}

Before we discuss the actual construction of the bijection, we consider the case $i=j$. On the one hand, \eqref{propenum:1} implies that there is only one $A\in\ASTZ_{n,l}^{j,j}$ and that $\mu(A)=p(A)=q(A)=0$. On the other hand, there is also only one $\lambda\in\CSSPP_{n,l-1}^{j}$ such that $\mu(\lambda)=p(\lambda)=q(\lambda)=0$, namely the integer partition that consists of $j$ times the part $j+l-1$. This fact is easy to see by using the lattice path representation of CSSPPs. Thus, the case $i=j$ does not deserve further study.

For the sake of simplicity, we assume $i<j$ in the following construction. However, it is possible to accommodate the case $i=j$ by a few small but cumbersome adjustments.

\subsection{From ASTZs to CSSPPs}
\label{sec:ASTZtoCSSPP}

So far, we have seen how to represent elements from $\ASTZ_{n,l}^{i,j}$ and $\CSSPP_{n,l-1}^{j}$ as lattice paths. In this section, we construct a bijective and weight-preserving correspondence between these two families of lattice paths.

Consider $A \in \ASTZ_{n,l}^{i,j}$ and write $\mu = \mu(A)$, $p = p(A)$, and $q = q(A)$. We encode the respective lattice path by its \emph{left turn representation} \cite{KM93}, \cite{Kra97}: Since a lattice path with given starting point and endpoint is uniquely determined by the coordinates $(x_m, y_m)$ of its left turns, we can represent the lattice path associated to $A$ by the following two-rowed array:
\begin{equation}\label{eq:LeftTurn}
\begin{array}{ccccccccccc}
-2i-l+4 & \le & x_1 & < & x_2 & < & \dots & < & x_{\mu+q} & \le & j-i-1\\
p & \le & y_1 & < & y_2 & < & \dots & < & y_{\mu+q} & \le & j-i-1
\end{array}
\end{equation}
Since the lattice path stays weakly above the main diagonal, it follows that $x_m \le y_m$ for all $1 \le m \le \mu+q$.

Next, we transform each row of \eqref{eq:LeftTurn} into a lattice path that consists of upward and rightward unit steps. If the row is given by $a \le z_1 < \dots < z_c \le b$, then the lattice path goes from $(-c,a-b+c-1)$ to $(0,0)$ such that each $z_m$ corresponds to a horizontal step at height~$z_m-m-b+c$ for all $m \in \{1,\dots,c\}$.

We continue by possibly truncating these paths: Each factor of $Q$ in the weight of $A$ corresponds to a left turn with coordinates $(x_m,y_m)$ such that $x_m=y_m$. We remove these redundant pieces of information by deleting the corresponding horizontal steps in the path associated to the $x$-coordinates. If $p=0$, then $y_1=0$. In that case, we remove the first step in the path corresponding to the $y$-coordinates, too.

For instance, the lattice path in Figure~\ref{fig:LP1} corresponding to our running example $\bm{A}$ has the following left turn representation:

\begin{equation*}
\begin{array}{ccccccccccc}
-4 & \le & -2 & < & 2 & < & 3 & < & 5 & \le & 5\\
1 & \le & 1 & < & 2 & < & 4 & < & 5 & \le & 5\\
\end{array}
\end{equation*}

The corresponding reduced paths are as follows:

\begin{center}
	\begin{tikzpicture}[scale=.5,baseline=(current bounding box)]
	
	\draw [help lines,step=1cm,dashed] (-.75,-.75) grid (2.75,6.75);
	
	\draw[->,thick] (-.75,6)--(2.75,6) node[right]{$x$};
	\draw[->,thick] (2,-.75)--(2,6.75) node[above]{$y$};
	
	\fill (0,0) circle (5pt);
	\fill (2,6) circle (5pt);
	
	\draw[ultra thick] (0,0) -- ++(0,1) -- ++(0,1) -- ++(1,0) -- ++(0,1) -- ++(0,1) -- ++(0,1) -- ++(1,0) -- ++(0,1);
	
	\node at (.5,2.5) {$-2$};
	\node at (1.5,5.5) {$3$};
	
	\end{tikzpicture}
	\qquad
	\begin{tikzpicture}[scale=.5,baseline=(current bounding box)]
	
	\draw [help lines,step=1cm,dashed] (-.75,-.75) grid (4.75,1.75);
	
	\draw[->,thick] (-.75,1)--(4.75,1) node[right]{$x$};
	\draw[->,thick] (4,-.75)--(4,1.75) node[above]{$y$};
	
	\fill (0,0) circle (5pt);
	\fill (4,1) circle (5pt);
	
	\draw[ultra thick] (0,0) -- ++(1,0) -- ++(1,0) -- ++(0,1) -- ++(1,0) -- ++(1,0);
	
	\node at (.5,.5) {$1$};
	\node at (1.5,.5) {$2$};
	\node at (2.5,1.5) {$4$};
	\node at (3.5,1.5) {$5$};
	
	\end{tikzpicture}
\end{center}

We interpret these paths as nonintersecting lattice paths: The path associated to the $x$-coordinates has $S_x^i \coloneqq (-\mu,-j-i-l+4+\mu+q)$ as starting point and $(0,0)$ as endpoint; the other path is shifted by one unit step to the left such that it has $S_y^i \coloneqq (-\mu-p-q,-j+i+\mu+p+q)$ as starting point and $(-1,0)$ as endpoint. These paths are nonintersecting by construction.

Hence, the path with starting point $S_x^i$ ends with a vertical step from $(0,-1)$ to $(0,0)$. We remove this step and change the endpoint from $(0,0)$ to $E_x \coloneqq (0,-1)$. In addition, we change the endpoint of the other path from $(-1,0)$ to $E_y \coloneqq (0,0)$ by adding a horizontal step from $(-1,0)$ to $(0,0)$. In the case of our running example $\bm{A}$, we obtain the two nonintersecting lattice paths displayed in Figure~\ref{fig:RunningExampleLatticePaths}.

\begin{figure}[htb]
	\centering
	\begin{tikzpicture}[scale=.5]
	\draw [help lines,step=1cm,dashed] (-5.75,-6.75) grid (0.75,0.75);
	
	\fill (-5,-1) circle (5pt);
	
	\fill (0,0) circle (5pt);
	
	\draw[ultra thick] (0,0) node[right]{\contour{white}{$E_y$}} --++ (-1,0) --++ (-1,0) --++ (-1,0) --++ (0,-1) --++ (-1,0) --++ (-1,0) node[left]{\contour{white}{$S_y^i$}};
	
	\fill (-2,-6) circle (5pt);
	
	\fill (0,-1) circle (5pt);
	
	\draw[ultra thick] (0,-1) node[right]{\contour{white}{$E_x$}} --++ (-1,0) --++ (0,-1) --++ (0,-1) --++ (0,-1) --++ (-1,0) --++ (0,-1) --++ (0,-1) node[left]{\contour{white}{$S_x^i$}};
	\end{tikzpicture}
	
	\caption{$\bm{A}$ represented as two nonintersecting lattice paths.}
	\label{fig:RunningExampleLatticePaths}
\end{figure}
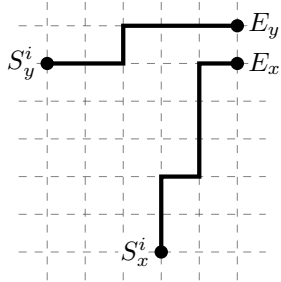

Let $\NILP(\{S_1,S_2\},\allowbreak \{E_1,E_2\})$ denote the set of pairs of nonintersecting lattice paths between the starting points $S_1$ and $S_2$ and the endpoints $E_1$ and $E_2$ which only consist of rightward and upward unit steps. The number of pairs of nonintersecting lattice paths in $\NILP(\{S_x^i,S_y^i\},\{E_x,E_y\})$ can be calculated with the help of the well-known Lindstr{\"o}m-Gessel-Viennot lemma and is given by \eqref{eq:MainEnumeration}. Thus, we have proved Theorem~\ref{thm:MainEnumeration}.

Next, we want show by a bijective proof that
\begin{multline}\label{eq:NILPs}
\NILP(\{S_x^{\tilde{\imath}},S_y^{\tilde{\imath}}\},\{E_x,E_y\}) \cup  \NILP(\{S_x^{\tilde{\imath}-1},S_y^{\tilde{\imath}-1}\},\{E_x,E_y\}) \\
= \NILP(\{S_x^{\tilde{\imath}},S_y^{\tilde{\imath}}-(0,1)\}, \{E_x-(0,1),E_y\}).
\end{multline}
Note that $S_x^{\tilde{\imath}-1}=S_x^{\tilde{\imath}}+(0,1)$ and $S_y^{\tilde{\imath}-1}=S_y^{\tilde{\imath}}-(0,1)$.
We interpret 
\begin{equation*}
\NILP(\{S_x^{\tilde{\imath}-1}, S_y^{\tilde{\imath}-1}\}, \{E_x,E_y\}) \subseteq \NILP(\{S_x^{\tilde{\imath}},S_y^{\tilde{\imath}}-(0,1)\}, \{E_x-(0,1),E_y\})
\end{equation*}
by shifting each path from $S_x^{\tilde{\imath}-1}$ to $E_x$ down by one unit step. The set
\begin{equation*}
\NILP(\{S_x^{\tilde{\imath}},S_y^{\tilde{\imath}}-(0,1)\}, \{E_x-(0,1),E_y\}) \setminus \NILP(\{S_x^{\tilde{\imath}-1},S_y^{\tilde{\imath}-1}\}, \{E_x,E_y\})
\end{equation*}
is the subset of $\NILP(\{S_x^{\tilde{\imath}},S_y^{\tilde{\imath}}-(0,1)\}, \{E_x-(0,1),E_y\})$ of nonintersecting lattice paths such that we create an intersection by shifting the path from $S_x^{\tilde{\imath}}$ to $E_x-(0,1)$ up by one unit step. We apply an instance of the Gessel-Viennot involution to these paths: Take the top right intersection point and switch the paths. Thus, we obtain a path from $S_x^{\tilde{\imath}}+(0,1)$ to $E_y$ and a path from $S_y^{\tilde{\imath}}-(0,1)$ to $E_x$. Shifting the former down by one unit step and the latter up by one unit step yield an element of $\NILP(\{S_x^{\tilde{\imath}}, S_y^{\tilde{\imath}}\},\{E_x,E_y\})$, which completes the proof of \eqref{eq:NILPs}.

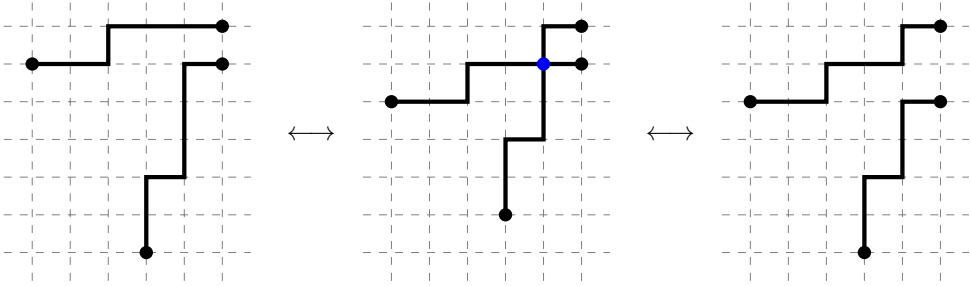
\begin{figure}[htb]
	\centering
	\begin{tikzpicture}[scale=.5,baseline=(current bounding box.center)]
	\draw [help lines,step=1cm,dashed] (-5.75,-6.75) grid (.75,.75);
	
	\fill (-5,-1) circle (5pt);
	
	\fill (0,0) circle (5pt);
	
	\draw[ultra thick] (0,0) --++ (-1,0) --++ (-1,0) --++ (-1,0) --++ (0,-1) --++ (-1,0) --++ (-1,0);
	
	\fill (-2,-6) circle (5pt);
	
	\fill (0,-1) circle (5pt);
	
	\draw[ultra thick] (0,-1) --++ (-1,0) --++ (0,-1) --++ (0,-1) --++ (0,-1) --++ (-1,0) --++ (0,-1) --++ (0,-1);
	\end{tikzpicture}
	\quad$\longleftrightarrow$\quad
	\begin{tikzpicture}[scale=.5,baseline=(current bounding box.center)]
	\draw [help lines,step=1cm,dashed] (-5.75,-6.75) grid (.75,.75);

	\fill (-5,-2) circle (5pt);
	
	\fill (0,0) circle (5pt);
	
	\draw[ultra thick] (0,-1) --++ (-1,0) --++ (-1,0) --++ (-1,0) --++ (0,-1) --++ (-1,0) --++ (-1,0);
	
	\fill (-2,-5) circle (5pt);
	
	\fill (0,-1) circle (5pt);
	
	\draw[ultra thick] (0,0) --++ (-1,0) --++ (0,-1) --++ (0,-1) --++ (0,-1) --++ (-1,0) --++ (0,-1) --++ (0,-1);
	
	\fill[mycolor] (-1,-1) circle (5pt);
	\end{tikzpicture}
	\quad$\longleftrightarrow$\quad
	\begin{tikzpicture}[scale=.5,baseline=(current bounding box.center)]
	\draw [help lines,step=1cm,dashed] (-5.75,-6.75) grid (.75,.75);
	
	\fill (-5,-2) circle (5pt);
	
	\fill (0,0) circle (5pt);
	
	\draw[ultra thick] (0,0) --++ (-1,0) --++ (0,-1) --++ (-1,0) --++ (-1,0) --++ (0,-1) --++ (-1,0) --++ (-1,0);
	
	\fill (-2,-6) circle (5pt);
	
	\fill (0,-2) circle (5pt);
	
	\draw[ultra thick] (0,-2) --++ (-1,0) --++ (0,-1) --++ (0,-1) --++ (-1,0) --++ (0,-1) --++ (0,-1);
	\end{tikzpicture}
	
	\caption{How to `shuffle' the two nonintersecting lattice paths corresponding to $\bm{A}$.}
	\label{fig:PathShiftingExample}
\end{figure}

For $A \in \ASTZ_{n,l}^{i,j}$, we `shuffle' the paths by repeating the process described above such that we shift and switch the paths $i-1$ times in total: At the beginning of the $\tilde{\imath}\textsuperscript{th}$ step, we have two nonintersecting lattice paths from $\{S_x^{i}, S_y^{i}-(0,\tilde{\imath}-1)\}$ to  $\{E_x-(0,\tilde{\imath}-1), E_y\}$; we shift the path with endpoint $E_y$ down by one unit step and the other path up by $(0,\tilde{\imath})$; after switching the paths at the top right intersection point, we shift the path with endpoint $E_x$ down by $(0,\tilde{\imath})$. This procedure yields an element of $\NILP(\{S_x^{i}, S_y^{i}-(0,\tilde{\imath})\}, \{E_x-(0,\tilde{\imath}), E_y\})$. After $i-1$ iterations, we obtain a pair of nonintersecting lattice paths from $\NILP(\{S_x^{i}, S_y^{i}-(0,i-1)\}, \{E_x-(0,i-1), E_y\})$. Figure~\ref{fig:PathShiftingExample} illustrates that process for our running example $\bm{A}$. Since $\bm{A}\in\ASTZ_{9,4}^{2,8}$, we have to shift and switch the paths once.

At the end, the upper path consists of $\mu+p+q$ horizontal steps and $j-\mu-p-q-1$ vertical steps, whereas the lower one consists of $\mu$ horizontal steps and $j+l-\mu-q-4$ vertical steps. Compare these paths with the paths in Figure~\ref{fig:CSSPPEnum}.

To construct the CSSPP, we have to rotate the upper path by $90^{\circ}$ and reflect the lower path along a horizontal or vertical axis. We have four possibilities in total for that. Each of them creates a possibly different but equally valid bijection. Finally, we connect the two paths and insert the missing steps that correspond to the steps we removed as described in Section~\ref{sec:CSSPPLatticePaths}. 

If we take the paths in Figure~\ref{fig:PathShiftingExample} and rotate the upper path clockwise by $90^{\circ}$, reflect the lower path along a vertical axis, and set $d=1$, our running example $\bm{A}$ is mapped to 
$\bm{\pi}$ as displayed in Figure~\ref{fig:RunningExampleCSSPP}. Table~\ref{tab:CompleteExample} provides a complete example for all CSSPPs in $\CSSPP_{5,1}^{5}$ with weight $M Q$, where we also set $d=1$ and take the same direction of rotation and the same axis of reflection as before.

\begin{table}[ht]
	\centering\scriptsize
	\caption{A complete example for the correspondence $\CSSPP_{5,1}^{5} \longleftrightarrow \bigcup_{i=1}^{5} \ASTZ_{5,2}^{i,5}$ with weight $M Q$ and $d=1$.}
		
	\begin{math}
	\begin{array}{ccc}
	\toprule
	
	\ytableaushort{66631}
	
	& 
	
	\ytableaushort{66621}
	
	& 
	
	\ytableaushort{66531}
	
	\\\midrule
	
	\nquad\begin{array}{cccccccccc}
	0  &  0  &  0  &  0  &  0  &  0  &  0  &  1  &  0  &  0\\
	&  0  &  0  &  0  &  0  &  1  &  0  & -1  &  1  &   \\
	&     &  0  &  0  &  0  &  0  &  0  &  1  &     &   \\
	&     &     &  0  &  0  &  0  &  1  &     &     &   \\
	&     &     &     &  1  &  0  &     &     &     &   
	\end{array}
	
	&
	
	\begin{array}{cccccccccc}
	0  &  0  &  0  &  0  &  0  &  0  &  1  &  0  &  0  &  0\\
	&  0  &  0  &  0  &  0  &  1  & -1  &  0  &  1  &   \\
	&     &  0  &  0  &  0  &  0  &  0  &  1  &     &   \\
	&     &     &  0  &  0  &  0  &  1  &     &     &   \\
	&     &     &     &  1  &  0  &     &     &     &   
	\end{array}
	
	&
	
	\begin{array}{cccccccccc}
	0  &  0  &  0  &  0  &  0  &  0  &  0  &  1  &  0  &  0\\
	&  0  &  0  &  0  &  0  &  0  &  1  & -1  &  1  &   \\
	&     &  0  &  0  &  0  &  0  &  0  &  1  &     &   \\
	&     &     &  1  &  0  &  0  &  0  &     &     &   \\
	&     &     &     &  0  &  1  &     &     &     &   
	\end{array}\nquad
	
	\\\midrule\midrule
	
	\ytableaushort{66521}
	& 
	
	\ytableaushort{66431}
	
	& 
	
	\ytableaushort{66421}
	
	\\\midrule
	
	\nquad\begin{array}{cccccccccc}
	0  &  0  &  0  &  0  &  0  &  0  &  1  &  0  &  0  &  0\\
	&  0  &  0  &  0  &  0  &  0  &  0  &  0  &  1  &   \\
	&     &  0  &  0  &  0  &  1  & -1  &  1  &     &   \\
	&     &     &  0  &  0  &  0  &  1  &     &     &   \\
	&     &     &     &  1  &  0  &     &     &     &   
	\end{array}
	
	&
	
	\begin{array}{cccccccccc}
	0  &  0  &  0  &  0  &  0  &  0  &  0  &  0  &  1  &  0\\
	&  0  &  0  &  0  &  0  &  1  &  0  &  0  &  0  &   \\
	&     &  0  &  0  &  0  &  0  &  0  &  1  &     &   \\
	&     &     &  1  &  0  & -1  &  1  &     &     &   \\
	&     &     &     &  0  &  1  &     &     &     &   
	\end{array}
	
	&
	
	\begin{array}{cccccccccc}
	0  &  0  &  0  &  0  &  0  &  0  &  0  &  0  &  1  &  0\\
	&  0  &  0  &  0  &  1  &  0  &  0  &  0  &  0  &   \\
	&     &  0  &  0  &  0  &  0  &  0  &  1  &     &   \\
	&     &     &  1  & -1  &  0  &  1  &     &     &   \\
	&     &     &     &  0  &  1  &     &     &     &   
	\end{array}\nquad
	
	\\\midrule\midrule
	
	\ytableaushort{65531}
	
	& 
	
	\ytableaushort{65521}
	
	& 
	
	\ytableaushort{65431}
	
	\\\midrule
	
	\nquad\begin{array}{cccccccccc}
	0  &  0  &  0  &  0  &  0  &  0  &  0  &  0  &  1  &  0\\
	&  0  &  0  &  0  &  0  &  0  &  1  &  0  &  0  &   \\
	&     &  1  &  0  &  0  &  0  & -1  &  1  &     &   \\
	&     &     &  0  &  0  &  0  &  1  &     &     &   \\
	&     &     &     &  0  &  1  &     &     &     &   
	\end{array}
	
	&
	
	\begin{array}{cccccccccc}
	0  &  0  &  0  &  0  &  0  &  0  &  0  &  0  &  1  &  0\\
	&  0  &  0  &  0  &  0  &  1  &  0  &  0  &  0  &   \\
	&     &  1  &  0  &  0  & -1  &  0  &  1  &     &   \\
	&     &     &  0  &  0  &  0  &  1  &     &     &   \\
	&     &     &     &  0  &  1  &     &     &     &   
	\end{array}
	
	&
	
	\begin{array}{cccccccccc}
	0  &  0  &  0  &  0  &  0  &  0  &  0  &  1  &  0  &  0\\
	&  0  &  0  &  0  &  0  &  0  &  0  &  0  &  1  &   \\
	&     &  0  &  0  &  0  &  1  &  0  &  0  &     &   \\
	&     &     &  1  &  0  & -1  &  1  &     &     &   \\
	&     &     &     &  0  &  1  &     &     &     &   
	\end{array}\nquad
	
	\\\midrule\midrule
	
	\ytableaushort{65421}
	
	& 
	
	\ytableaushort{64431}
	
	& 
	
	\ytableaushort{64421}
	
	\\\midrule
	
	\nquad\begin{array}{cccccccccc}
	0  &  0  &  0  &  0  &  0  &  0  &  0  &  1  &  0  &  0\\
	&  0  &  0  &  0  &  0  &  0  &  0  &  0  &  1  &   \\
	&     &  0  &  0  &  1  &  0  &  0  &  0  &     &   \\
	&     &     &  1  & -1  &  0  &  1  &     &     &   \\
	&     &     &     &  0  &  1  &     &     &     &   \\
	\end{array}
	
	&
	
	\begin{array}{cccccccccc}
	0  &  0  &  0  &  0  &  0  &  0  &  0  &  0  &  1  &  0\\
	&  0  &  0  &  0  &  1  &  0  &  0  &  0  &  0  &   \\
	&     &  1  &  0  & -1  &  0  &  0  &  1  &     &   \\
	&     &     &  0  &  0  &  0  &  1  &     &     &   \\
	&     &     &     &  0  &  1  &     &     &     &   \\
	\end{array}
	
	&
	
	\begin{array}{cccccccccc}
	0  &  0  &  0  &  0  &  0  &  0  &  0  &  0  &  1  &  0\\
	&  0  &  0  &  1  &  0  &  0  &  0  &  0  &  0  &   \\
	&     &  1  & -1  &  0  &  0  &  0  &  1  &     &   \\
	&     &     &  0  &  0  &  0  &  1  &     &     &   \\
	&     &     &     &  0  &  1  &     &     &     &   \\
	\end{array}\nquad
	
	\\\bottomrule
	\end{array}
	\end{math}
	\label{tab:CompleteExample}
\end{table}

In conclusion, we give a brief summary of the bijection $\bigcup_{i=1}^{n} \ASTZ_{n,l}^{i,j} \mapsto \CSSPP_{n,l-1}^{j}$. The bijection has three parameters that need to be fixed: $d\in\{1,\dots,l-1\}$, clockwise or counterclockwise rotation, and horizontal or vertical reflection. Consider $A\in\ASTZ_{n,l}^{i,j}$.
\begin{compactitem}
	\item Map $A$ to a lattice path from $(-l-2i+3,0)$ to $(j-i,j-i)$ that does not cross the main diagonal via the correspondence with osculating paths.
	\item Use the left turn representation to obtain paths from $S_x^{i}$ to  $E_x$ and from $S_y^{i}$ to  $E_y$.
	\item Repeat the following step $i-1$ times: Shift the path with endpoint $E_y$ down by one unit step and the path with endpoint $E_x$ up by one unit step and then switch the paths at the top right intersection point. 
	\item Rotate the upper path and reflect the lower path as determined beforehand. Join these two paths and insert the missing steps according to $d$, $p(A)$, and $q(A)$. Finally, recover the corresponding $\lambda \in \CSSPP_{n,l-1}^{j}$.
\end{compactitem}

\subsection{From CSSPPs to ASTZs}
\label{sec:CSSPPtoASTZ}

In this section, we succinctly describe the inverse mapping $\CSSPP_{n,k}^{j} \mapsto \bigcup_{i=1}^{n} \ASTZ_{n,k+1}^{i,j}$. We again fix the following three parameters: $d\in\{1,\dots,k\}$, clockwise or counterclockwise rotation, and horizontal or vertical reflection. Given $\lambda \in \CSSPP_{n,k}^{j}$, we analogously set $\mu=\mu_d(\lambda)$, $p=p_d(\lambda)$, and $q=q(\lambda)$ as before.

We interpret $\lambda$ as a lattice path, cut it into two parts along the line $y=x+k$ and truncate the lower path as described in Section~\ref{sec:CSSPPLatticePaths}. Next, we rotate the upper path by $90^{\circ}$ and we reflect the lower path according to the specified parameters. We place the paths as follows: the upper path goes from $(-\mu-p-q,-j+\mu+p+q+1)$ to $E_y = (0,0)$, the lower path goes from $(-\mu,-j-k+\mu+q+2)$ to $E_x=(0,-1)$.

As long as the two paths intersect, we repeat as follows: Switch the paths at the top right intersection point and shift the path with endpoint $E_y$ down by one unit step and the path with endpoint $E_x$ up by one unit step. Eventually, we obtain two nonintersecting lattice paths. We set $i$ to be the number of times we switched the paths increased by $1$. The lattice paths do not intersect anymore after at most $j-\mu-p-q-1$ times of shifting. Thus, $i$ is not larger than $j-\mu-p-q$, which implies $i \le j$. Since the final paths are nonintersecting, the upper path comprises a horizontal step from $(-1,0)$ to $(0,0)$. We remove this step and add a vertical step from $(0,-1)$ to $(0,0)$ to the lower path instead.

Next, we transform these two paths into a two-rowed array as presented in \eqref{eq:LeftTurn}. The lower path is mapped to $-2i-k+3 \le x_1 < \dots < x_\mu \le j-i-1$ and the upper path is mapped to $1 \le y_1 < \dots < x_{\mu+q} \le j-i-1$ such that $x_m$ and $y_m$ equal $h_m+m-\mu-q+j-i-1$ where $h_m$ denotes the height of the $m\textsuperscript{th}$ horizontal step from the left of the respective path.
Thus, we obtain a two-rowed array with $\mu$ entries in the first row and with $\mu+p+q-1$ entries in the second row. If $p=0$, we add $0$ as the first entry in the second row. Next, we have to insert $q$ additional entries in the first row. To add an additional entry, compare the two rows componentwise. Suppose the first row is $x_1 < \dots < x_m$ and the second one is $y_1 < \dots < y_{\mu+q}$. Search for the smallest integer~$a$ such that $x_a \ge y_a$. Then set $x_a \mapsto y_a$ and $x_b \mapsto x_{b-1}+1$ for all $a+1 < b \le m+1$. If there is no such integer $a$, then set $x_{m+1} \mapsto y_{m+1}$.

After having inserted $q$ entries, we finally obtain the left turn representation of a lattice path from which we can uniquely recover an element of $\ASTZ_{n,k+1}^{i,j}$. 

\subsection{The exceptional case $l=1$}
\label{sec:QAST}

The notion of ASTZs can be generalised to $l=1$. Despite being a natural generalisation, this case is more intricate and often needs special consideration. In the following, we define ASTZs for $l=1$ and present a refined enumeration but we omit a bijective proof.

\begin{definition}
	For $n \geq 1$, an \emph{$(n,1)$-alternating sign trapezoid} or \emph{quasi alternating sign triangle} (QAST) is an array $(a_{i,j})_{1 \le i \le n,i \le j \le 2n-i}$ of $-1$s, $0$s, and $+1$s
	such that the following three conditions hold:
	\begin{compactitem}
		\item the nonzero entries alternate in sign in each row and each column;
		\item the topmost nonzero entry in each column is $1$;
		\item the bottom row sums to $0$ or $1$ but every other row to $1$. 
	\end{compactitem}
\end{definition}

QASTs with $n$ rows are equinumerous with CSSPPs of class~$0$ with at most $n$ parts in the first row. For a refined enumeration, we define the following statistics.

Regarding QASTs, we define the statistics $\mu$ and $r$ similarly to the case $l \ge 2$, whereas we slightly adapt the statistics $p$ and $q$. Let $A\in\ASTZ_{n,1}$, then we define
\begin{align*}
p(A) &\coloneqq \text{\# $10$-columns among the $n-1$ leftmost columns of A,}\\
q(A) &\coloneqq \text{\# $10$-columns among the $n-1$ rightmost columns of A.}
\end{align*}
In order to define the weight $w(A)$, we introduce the \emph{Iverson bracket}: For a logical statement $S$, $\left[S\right]=1$ if $S$ holds true and $\left[S\right]=0$ otherwise. We now set $w(A)$ to be equal to
\begin{equation*}
M^{\mu(A)} R^{r(A)} P^{p(A)} Q^{q(A)} (P+Q-M)^{\left[\text{central column is a $10$-column}\right]}.
\end{equation*}
	
In the case of CSSPPs of class~$0$, we also adapt some of the statistics. Let $\pi\in\CSSPP_{n,0}$, then $r(\pi)$ counts the rows of $\pi$ as before. For the other statistics, we define
\begin{align*}
\mu(P) &\coloneqq \text{\# parts $\pi_{i,j}\in\{2,3,\dots,j-i+k\} \setminus \{j-i\}$,}\\
p(P) &\coloneqq \text{\# parts $\pi_{i,j}=j-i>1$,}\\
q(P) &\coloneqq \text{\# parts $\pi_{i,j}=1$ such that $j-i >1$.}
\end{align*}
We define the weight $w(\pi)$ to be
\begin{equation*}
M^{\mu(\pi)} R^{r(\pi)} P^{p(\pi)} Q^{q(\pi)} (P+Q-M)^{\left[\text{$\pi_{i,j}=1$ such that $j-i=1$}\right]}.
\end{equation*}

Fischer \cite{Fis19} proved that the sets $\{\pi \in \CSSPP_{n,0} \mid r(\pi) = r\}$ and $\{A \in \ASTZ_{n,1} \mid r(A) = r\}$ have the same cardinality for any $1 \le r \le n$. Moreover, the author \cite{Hon} showed that the generating functions for $\ASTZ_{n,1}$ and for $\CSSPP_{n,0}$ with respect to the aforementioned weights coincide and are given by \eqref{eq:GenFunc}. However, the sets $\{w(A) \mid A \in \ASTZ_{n,1}\}$ and $\{w(\pi) \mid \pi \in \CSSPP_{n,0}\}$ are not the same in general. By modifying the bijection we have for the case $l \ge 2$, we presume to obtain a bijective proof nonetheless. 

\section{Concluding remarks}
\label{sec:Remarks}

\subsection{The bijection at the level of partitions}
\label{sec:PartitionLevel}

Every single-row CSSPP is clearly a partition. But also every ASTZ with exactly one $1$-column in the left half can be interpreted as a partition. This can be seen as follows: Let $A\in\ASTZ_{n,l}^{i,j}$. The corresponding $(n,l)$-osculating paths configuration consists of two connected regions of blank tiles divided by a single path. There are two exceptions: either the path travels along the edge of the grid; or there is no path at all. In both cases, there is only one connected region of blank tiles. In any case, we interpret the (upper left) region of blank tiles as a shifted Young diagram and denote its shape by $\lambda(A)$. For instance, the partition corresponding to our running example $\bm{A}$ is $\lambda(\bm{A})=(20,16,13,11,10,5,1)$. Figure~\ref{fig:PartitionOfASTZ} illustrates how to read off the partition from an osculating paths configuration.

\begin{figure}[htb]
	\centering
	\begin{tikzpicture}[scale=0.25,baseline=(current bounding box.center)]
		
		\fill[gray] (-10,12) -- (14,12) -- (14,10) -- (8,10) -- (8,8) -- (4,8) -- (4,6) -- (2,6) -- (2,2) -- (0,2) -- (-2,2) -- (-2,4) -- (-4,4) -- (-4,6) -- (-6,6) -- (-6,8) -- (-8,8) -- (-8,10) -- (-10,10) -- cycle;
		
		\draw (-10,12) -- (14,12);
		\draw (-10,10) -- (14,10);
		\draw (-8,8) -- (12,8);
		\draw (-6,6) -- (10,6);
		\draw (-4,4) -- (8,4);
		\draw (-2,2) -- (6,2);
		\draw (0,0) -- (4,0);
		
		\draw (-10,12) -- (-10,10);
		\draw (-8,12) -- (-8,8);
		\draw (-6,12) -- (-6,6);
		\draw (-4,12) -- (-4,4);
		\draw (-2,12) -- (-2,2);
		\draw (0,12) -- (0,0);
		\draw (2,12) -- (2,0);
		\draw (4,12) -- (4,0);
		\draw (6,12) -- (6,2);
		\draw (8,12) -- (8,4);
		\draw (10,12) -- (10,6);
		\draw (12,12) -- (12,8);
		\draw (14,12) -- (14,10);

		\draw[mycolor,ultra thick,rounded corners] (1,0) -- (1,1) -- (3,1) -- (3,5) -- (5,5) -- (5,7) -- (9,7) -- (9,9) -- (12,9);
		
	\end{tikzpicture}
	$\quad\longmapsto\quad$
	(12,8,5,3,2)
	
	\caption{The upper left connected region of blank tiles in the osculating paths configuration interpreted as a shifted Young diagram.}
	\label{fig:PartitionOfASTZ}
	
\end{figure}
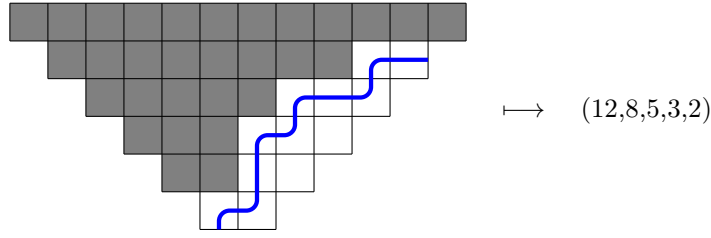

Thus, $\ASTZ_{n,l}^{i,j}$ corresponds to strict partitions $\lambda$ of length $n-i$ such that $\lambda_k=2n+l-2k$ for all $1 \le k \le n-j$. The counterparts of the statistics defined on $\ASTZ_{n,l}^{i,j}$ are 
\begin{align*}
	\tilde{\mu}(\lambda) &\coloneqq \text{\# parts $\lambda_{k}$ such that $\lambda_{k-1} + 2 \leq \lambda_k < 2n+l-2k-2$,}\\
	\tilde{p}(\lambda) &\coloneqq \text{\# $1$s in $\lambda$,}\\
	\tilde{q}(\lambda) &\coloneqq \text{\# parts $\lambda_{k}=2n+l-2k-2$,}
\end{align*}
where we set $\lambda_{n-i-1}=0$. Note that $\tilde{p}(\lambda)$ is either $0$ or $1$ since $\lambda$ is strict and therefore has only distinct parts.

When applying the bijection to $A\in\ASTZ_{n,l}^{i,j}$, we obtain two lattice paths by the left turn representation \eqref{eq:LeftTurn}, which we then repeatedly shuffle as described above for a total amount of $i-1$ times. After each of those steps, we could reassemble the two paths to a single one as we would do at the end to obtain an element of $\CSSPP_{n,l-1}^{j}$. In this way, we map $A$ to a sequence $(\lambda^{(0)},\lambda^{(1)},\dots,\lambda^{(i)})$ of partitions such that $\lambda^{(0)}=\lambda(A)$, and $\lambda^{(s+1)}$ is obtained after the $s$\textsuperscript{th} shuffling. We can show that, for any $1 \le s \le i$, it holds true that $\ell(\lambda^{(s)}) = j-i+s$ and $\lambda^{(s)}_1=j+i+l-1-s$. By interpreting $\lambda^{(s)}$ as a single-row CSSPP, we also see that the weights of all $\lambda^{(0)}, \lambda^{(1)}, \dots, \lambda^{(i)}$ coincide. In the end, $\lambda^{(i)}$ has length~$j$ and class~$l-1$ and is indeed an element of $\CSSPP_{n,l-1}^{j}$. However, due to the intricate construction of the bijection, we do not see how to fully characterise it at the level of partitions.

\subsection{Further statistics}
\label{sec:Behrend}

Let $M=(m_{i,j})_{1 \le i,j \le n}$ be an $n \times n$ ASM. Mills, Robbins, and Rumsey \cite{MRR83} defined the \emph{inversion number} of $M$ to be the sum of $m_{i,j} m_{\tilde{\imath},\tilde{\jmath}}$ over all $i$, $j$, $\tilde{\imath}$, and $\tilde{\jmath}$ such that $1 \le i < \tilde{\imath} \le n$ and $1 \le \tilde{\jmath} < j \le n$. This notion generalises the notion of the inversion number of permutation matrices. They conjectured that the number of $n \times n$ ASMs with inversion number $p$, $\mu$ entries equal to $-1$, and $m$ $0$s to the left of the 1 in the first row is equal to the number of DPPs without parts exceeding $n$ and with $p$ parts in total, thereof $\mu$ special parts and $m$ parts equal to $n$, which was proved by Behrend, Di Francesco and Zinn-Justin \cite{BFZ12}.

Behrend and Fischer \cite{BFPriv} conjecture a refined enumeration including an analogue of the inversion number for ASTZs: For $A\in\ASTZ_{n,l}$, let 
\begin{equation*}
\inv(A) \coloneqq \sum_{i < \tilde{\imath}} \sum_{j \le \tilde{\jmath}} a_{i,j} a_{\tilde{\imath},\tilde{\jmath}} + \text{\# $11$-columns among the $n$ leftmost columns of A}
\end{equation*}
denote the inversion number of $A$. For $\pi\in\CSSPP_{n,k}$, we define
\begin{equation*}
\inv(\pi) \coloneqq \text{\# parts $\pi_{i,j}>j-i+k$.}
\end{equation*}
Note that $\inv(\pi)$ equals the total number of parts minus $\mu_{d}(\pi)+p_d(\pi)+q(\pi)$ for any $d\in \{1,\dots,k\}$.

Then the quintuple $(\mu(A), r(A), p(A), q(A), \inv(A))$ on $A\in\ASTZ_{n,l}$ has conjecturally the same distribution as $(\mu_{d}(\pi), r(\pi), p_{d}(\pi), q(\pi), \inv(\pi))$ on $\pi\in\CSSPP_{n,l-1}$ if $d=l-1$. Computer experiments suggest that this equally holds true for $1 \le d < l-1$.

We see that the bijection presented in this paper respects these additional statistics since it can be shown that the inversion number of $A$ is equal to $j-\mu(A)-p(A)-q(A)$ for $A\in\ASTZ_{n,l}^{i,j}$.

\subsection{Comparison with other bijections}
\label{sec:OtherBijections}

So far, there are no complete bijections known between ASTZs and CSSPPs. Fischer \cite{Fis19} suggested that, for $1 \le d \le l-1$, there is a rather simple bijection between the sets $\{A \in \ASTZ_{n,l} \mid r(A) = 1, p(A)=p, q(A)=q\}$ and $\{\pi \in \CSSPP_{n,l-1} \mid r(\pi) = 1, p_d(\pi)=p, q(\pi)=q\}$. We present such a bijection which is based on the reflection principle. For the sake of brevity, we only describe how to map ASTZs to CSSPPs:

Given $A \in \ASTZ_{n,l}^{i,j}$ such that $p(A)=p$ and $q(A)=q$, consider the associated lattice path from $(-l-2i+3,0)$ to $(j-i,j-i)$ as described in Section~\ref{sec:ASTZLatticePaths}. Remove the first and the last step as well as all vertical steps that touch the main diagonal, which yields a lattice path from $(-l-2i-p+4,p+q)$ to $(j-i-1,j-i)$ that does not touch the main diagonal. Repeat the following step $i-1$ times: Shift the path to the right by two unit steps such that it intersects the main diagonal; then, reflect the path from the top right intersection point to the endpoint along the main diagonal. In the end, we have a lattice path from $(-l-p+2,p+q)$ to $(j-2,j-1)$. We rotate this path by $90^{\circ}$ -- for which we have two possibilities -- and insert the missing $q+2$ steps as described in Section~\ref{sec:CSSPPLatticePaths} to obtain a lattice path representation of a $\pi \in \CSSPP_{n,l-1}^{j}$ such that $p_d(\pi)=p$ and $q(\pi)=q$.

This bijection does not relate the number of $-1$s to the number of  $d$-special parts in contrast to the one presented in this paper. In order to incorporate the statistic~$\mu$, we required the detour of doubling the number of paths via the left turn representation and applying an instance of the Gessel-Viennot involution instead of dealing with a single path and the reflection principle. 

Even in the case of ASMs and DPPs, it is challenging to construct a bijection that includes the number of $-1$s and the number of special parts. The majority of established bijections only relate permutation matrices to DPPs without special parts. Ayyer \cite{Ayy10} provided a direct bijection that avoids the detour of nonintersecting lattice paths. Striker \cite{Str11} presented a bijection that also related the inversion number and the position of the $1$ in the last column of the permutation matrix to the number of parts and the number of maximal parts, respectively. Fulmek \cite{Ful20} established a bijection that preserves one additional statistic investigated by Behrend, Di Francesco and Zinn-Justin \cite{BFZ13}. The recent bijection by Fischer and Konvalinka \cite{FK20b}, however, is indeed a general bijection between all ASMs and DPPs but it does not preserve any of the statistics that we considered on CSSPPs.

With regard to ASMs and TSSCPPs, Striker \cite{Str18} gave an explicit weight-preserving bijection between permutation matrices and a subset of TSSCPPs. In addition, there are  two partial bijections between subclasses of ASMs and TSSCPPs to be mentioned here that do include ASMs with $-1$s: one independently found by Ayyer, Cori and Gouyou-Beauchamps \cite{ACG11} as well as by Striker \cite{Str11b} and another one by Biane and Cheballah \cite{CB12} which was later simplified by Bettinelli \cite{Bet16}.

\subsection{On a general bijection}
\label{sec:GeneralBijection}

The next step would be to extend the bijection presented in this paper to all ASTZs and CSSPPs. By relating the number of $-1$s to the number of $d$-special parts, we manage to incorporate a feature that other bijections lack. The key idea of our bijection is the correspondence between ASTZs and osculating paths. However, we are only able to deal with ASTZs whose osculating paths configurations consist of a single path. In this instance, the position of the rightmost $0$-column of the ASTZ determines the length of the corresponding single-row CSSPP.

Let us consider the naive approach towards a generalisation where we apply our bijection to each path in the osculating paths configuration separately. To this end, let $A\in\ASTZ_{n,l}$ such that $r(A) \ge 2$ and set $r=r(A)$. $A$ has exactly $r$ $0$-columns among the $n$ rightmost columns, say at the positions $(j_1,\dots,j_r)$. Thus, the naively generalised bijection would map $A$ to integer partitions of length~$j_1,\dots,j_r$ which we could arrange to a filled shifted Young diagram of shape~$(j_r,\dots,j_1)$. Two problems arise: On the one hand, we would have to check if these fillings are indeed strictly decreasing along columns. This might be accomplished but the major drawback is that, on the other hand, the number of $(n,l)$-ASTZs with $r$ $0$-columns at the positions $(j_1,\dots,j_r)$ and the number of CSSPPs of shape~$(j_r,\dots,j_1)$ and of class~$l-1$ are not the same in general.

For instance, there are five $(3,2)$-ASTZs with $0$-columns at positions~$1$ and $3$. Each of them corresponds to a family of two osculating paths. If we map each path individually to a partition, we would get a filled shifted Young diagram of shape~$(3,1)$. However, there are in total seven CSSPPs of shape~$(3,1)$ and of class~$1$. See Table~\ref{tab:Counterexample}. We would need an intermediate step to manipulate the osculating paths before using the existing bijection to map each path separately.  

\begin{table}[ht]
	\caption{There are five $(3,2)$-ASTZs  with $0$-columns at positions~$1$ and $3$ but seven CSSPPs  of shape~$(3,1)$ and of class~$1$.}
	\label{tab:Counterexample}
	\centering
	\begin{math}
	\begin{array}{c}
	\toprule
	\resizebox{\textwidth}{!}{\begin{math}
	\begin{array}{cccccc}
	1 & 0 & 0 & 0 & 0 & 0\\
	& 0 & 0 & 0 & 1 & \\
	& & 1 & 0 & & 
	\end{array}\qquad
	\begin{array}{cccccc}
	0 & 1 & 0 & 0 & 0 & 0\\
	& 0 & 0 & 0 & 1 & \\
	& & 1 & 0 & & 
	\end{array}\qquad
	\begin{array}{cccccc}
	0 & 0 & 1 & 0 & 0 & 0\\
	& 1 & -1 & 0 & 1 & \\
	& & 1 & 0 & & 
	\end{array}\qquad
	\begin{array}{cccccc}
	0 & 0 & 0 & 1 & 0 & 0\\
	& 1 & 0 & -1 & 1 & \\
	& & 1 & 0 & & 
	\end{array}\qquad
	\begin{array}{cccccc}
	0 & 0 & 0 & 0 & 1 & 0\\
	& 1 & 0 & 0 & 0 & \\
	& & 1 & 0 & & 
	\end{array}
	\end{math}}
	\\\midrule
	\resizebox{.75\textwidth}{!}{\begin{math}
	\begin{ytableau}
	4 & 4 & 4 \\
	\none & 2
	\end{ytableau}\qquad
	\begin{ytableau}
	4 & 4 & 3 \\
	\none & 2
	\end{ytableau}\qquad
	\begin{ytableau}
	4 & 4 & 2 \\
	\none & 2
	\end{ytableau}\qquad
	\begin{ytableau}
	4 & 4 & 1 \\
	\none & 2
	\end{ytableau}\qquad
	\begin{ytableau}
	4 & 3 & 3 \\
	\none & 2
	\end{ytableau}\qquad
	\begin{ytableau}
	4 & 3 & 2 \\
	\none & 2
	\end{ytableau}\qquad
	\begin{ytableau}
	4 & 3 & 1 \\
	\none & 2
	\end{ytableau}
	\end{math}}
	\\\bottomrule
	\end{array}
	\end{math}
\end{table}

\section*{Acknowledgements}

I thank Roger Behrend for helpful comments and for sharing his results discussed in Section~\ref{sec:Behrend}. Furthermore, I thank the anonymous referees for the feedback and their helpful suggestions that improved the paper.


\bibliography{RefBijections}
\bibliographystyle{alpha}

\textsc{Universit{\"a}t Wien, Fakult{\"a}t f{\"u}r Mathematik, Oskar-Morgenstern-Platz 1, 1090 Wien, Austria}

\textit{E-mail address:} \href{mailto:hans.hoengesberg@univie.ac.at}{hans.hoengesberg@univie.ac.at}

\end{document}